\numberwithin{equation}{section}
\begin{document}
\newcommand{\s}{\vspace{0.2cm}}

\newtheorem{theo}{Theorem}
\newtheorem*{theo*}{Theorem}
\newtheorem{prop}{Proposition}
\newtheorem{coro}{Corollary}
\newtheorem{lemm}{Lemma}
\newtheorem{example}{Example}
\theoremstyle{remark}
\newtheorem{rema}{\bf Remark}
\newtheorem*{nota*}{\bf Notation}
\newtheorem{defi}{\bf Definition}

\title[On Jacobians with group action and coverings]{On Jacobians with group action and coverings}
\date{}
\author{Sebasti\'an Reyes-Carocca and Rub\'i E. Rodr\'iguez}
\address{Departamento de Matem\'atica y Estad\'istica, Universidad de La Frontera, Avenida Francisco Salazar 01145, Casilla 54-D, Temuco,  Chile.}
\email{sebastian.reyes@ufrontera.cl, rubi.rodriguez@ufrontera.cl}
\thanks{Partially supported by Postdoctoral Fondecyt Grant 3160002, Fondecyt Grant 1141099 and Anillo ACT 1415 PIA-CONICYT Grant}
\keywords{Riemann surfaces, Group actions,  Jacobian varieties}
\subjclass[2010]{14H40 , 14H37, 14L30}

\begin{abstract}

Let $S$ be a compact Riemann surface and let $H$ be a finite group. It is known that if $H$ acts on $S$ then there is a $H$-equivariant isogeny decomposition of the Jacobian variety $JS$ of $S,$ called the group algebra decomposition of $JS$ with respect to $H.$

If $S_1 \to S_2$ is a regular covering map, then it is also known that the group algebra decomposition of $JS_1$ induces an isogeny decomposition of $JS_2.$

In this article we deal with the converse situation. More precisely, we prove that the group algebra decomposition can be lifted under regular covering maps, under appropriate conditions.

\end{abstract}
\maketitle

\section{Introduction}Let $H$ be a finite group acting on a compact Riemann surface $S$. It is a known fact that this action induces an action of $H$ on the Jacobian variety $JS$ of $S$ and this, in turn, gives rise to a $H$-equivariant isogeny decomposition of $JS$ as a product of abelian subvarieties. This decomposition is called the {\it group algebra decomposition} of $JS$ with respect to $H$; see \cite{cr} and \cite{l-r}.

\s

The decomposition of Jacobians with group actions has been extensively studied in different settings, with applications to theta functions, to the theory of integrable systems and to the moduli spaces of principal bundles of curves, among others. The simplest case of such a decomposition is when $H$ is a group of order two; this fact was already noticed in 1895  by Wirtinger \cite{W} and used by Schottky-Jung in \cite{SJ}.

For decompositions of Jacobians with respect to other special groups, we refer to the articles \cite{ba}, \cite{d1}, \cite{CHQ}, \cite{nos}, \cite{hr}, \cite{leslie},  \cite{P}, \cite{PA}, \cite{d3}, \cite{d2} and \cite{d4}.

\s

Let $C$ be a compact Riemann surface admitting the action of a finite group $G.$ For every subgroup $N$ of $G$ consider the associated regular covering map $$C \to S=C_N$$given by the action of $N$ on $C.$ It was proved in \cite{cr} that the group algebra decomposition of $JC$ with respect to $G$ induces an isogeny decomposition of $JS.$ Furthermore, the factors arising in the {\it induced decomposition} of $JS$ are exactly the same as the ones arising in the decomposition of $JC,$ possibly with lower multiplicity.

\s

This article is mainly devoted to generalize the aforementioned result, by investigating the unstudied converse situation. More precisely, let $S$ be a compact Riemann surface admitting the action of a finite group $H,$ and consider any Riemann surface $C$ satisfying the next two conditions:

\begin{enumerate}
\item[(a)] there exists a regular covering map $\pi : C \to S$, and
\item[(b)] $C$ admits the action of a supergroup $G$ of the deck group $K$ of $\pi,$ in such a way that $K$ is normal in $G$ and $G/K \cong H.$
\end{enumerate}

The main result of this paper states that the group algebra decomposition of $JS$ with respect to $H$ {\it lifts} to obtain an isogeny decomposition of $JC,$ which is closely related to the group algebra decomposition of $JC$ with respect to $G.$  More precisely, the factors arising in the decomposition of $JC$ can be of two types, namely:
\begin{enumerate}
\item[(a)] the factors arising in the group algebra decomposition of $JS$, with exactly the same multiplicities, or
\item[(b)] factors whose product is isogenous to the Prym variety $P(C/S)$ associated to the covering map $\pi$.
\end{enumerate}
\s

As a matter of fact, the main result tells us about the compatibility between the group algebra decomposition and the classically known decomposition
$$JC \sim JS \times P(C/S)$$ associated to the covering map $\pi : C \to S$.

\s

We anticipate that in order to determine the group algebra decomposition of $JC$ with respect to $G$ it is necessary to have a complete knowledge of the rational irreducible representations of $G.$ By contrast, as we shall see later, the application of the main result of this paper significantly simplifies those computations, by reducing the problem to study the representations of $G$ that are not trivial in $K.$

As a further application of the main result, we derive a result related to decomposition of Jacobians $JC$ as products of Jacobians of quotients of $C.$

\s

This article is organized as follows. In Section \ref{pre} we shall briefly review the basic background; namely, group actions on Riemann surfaces, representation of groups, abelian varieties and the group algebra decomposition theorem for Jacobians. In Section \ref{lemmata} we shall prove two basic purely algebraic lemmata. The results of this paper will be stated and proved in Sections \ref{cuerpo} and \ref{kani}. Finally, in Section \ref{ejems} we shall exhibit two explicit examples in order to show how our results can be applied.

\s
{\bf Acknowledgments.} The authors are grateful to their colleague Angel Carocca for his helpful  suggestions throughout the preparation of this manuscript.

\section{Preliminaries} \label{pre}
\subsection{Group actions on Riemann surfaces}
Let $S$ be a compact Riemann surface and let $\mbox{Aut}(S)$ be its automorphism group. We recall that a finite group $H$ acts on $S$ if there is a monomorphism $\epsilon : H \to \mbox{Aut}(S).$ The space of orbits $S_H$ of the action of $H \cong \epsilon(H)$ on $S$ is endowed with a Riemann surface structure, such that the natural projection $\pi_H : S \to S_H$ is holomorphic. The degree of $\pi_H$ is the order $|H|$ of $H$ and the multiplicity of $\pi_H$ at $p \in S$ is $|H_p|,$ where $H_p$ denotes the stabilizer of $p$ in $H$. If $|H_p| \neq 1$ then $p$ is called a {\it branch point} of $\pi_H$; its image by $\pi_H$ is a {\it branch value} of $\pi_H$.

Let $\{p_1, \ldots, p_l\}$ be a maximal collection of non-$H$-equivalent branch points of $\pi_H$. The {\it signature} of the action of $H$ on $S$ is the tuple $(\gamma; m_1, \ldots, m_l)$ where $\gamma$ is the genus of the quotient $S_H$ and $m_i=|H_{p_i}|.$  The branch value $\pi_H(p_i)$ is said to be {\it marked} with $m_i.$ The Riemann-Hurwitz formula relates these numbers with the order of $H$ and the genus $g$ of $S.$ Namely, \begin{equation*} \label{R-R}
2g-2=|H| [   2\gamma - 2 +  \Sigma_{i=1}^l(1-\tfrac{1}{m_i}) ]
\end{equation*}

A tuple $(a_1, \ldots, a_{\gamma}, b_1, \ldots,
b_{\gamma}, c_1, \ldots,c_l)$ of elements of $H$ is called a
\textit{generating vector of $H$ of type $(\gamma; m_1, \ldots ,m_l)$} if the following conditions are satisfied:\begin{enumerate}
\item[(a)] $H$ is generated by $a_1, \ldots, a_{\gamma}, b_1, \ldots,
b_{\gamma}, c_1, \ldots,c_l;$
\item[(b)] the order of $c_i$ is $m_i$ for all $i,$ and
\item[(c)] $\Pi_{i=1}^{\gamma}(a_ib_ia_i^{-1}b_i^{-1}) \Pi_{i=1}^l c_i=1.$
\end{enumerate}

Riemann's existence theorem ensures that $H$ acts on a Riemann surface of genus $g$ with signature $(\gamma; m_1, \ldots, m_l)$ if and only if the Riemann-Hurwitz formula is satisfied and $H$ has a generating vector of type $(\gamma; m_1, \ldots ,m_l);$ see \cite{brou}.

We refer to \cite{Farkas} for further material related to Riemann surfaces and group actions.

\subsection{Representations of groups}  Let $H$ be a finite group and let $\rho : H \to \mbox{GL}(V)$ be a complex  representation of $H.$ Abusing notation, we shall also write $V$  to refer to the representation $\rho.$ The {\it degree} $d_V$ of $V$ is the dimension of $V$ as a complex vector space, and the {\it character} $\chi_V$ of $V$ is the map obtained by associating to each $h \in H$ the trace of the matrix $\rho(h).$ Two representations $V_1$ and $V_2$ are {\it equivalent} if and only if their characters agree; we write $V_1 \cong V_2.$ The {\it character field} $K_V$ of $V$ is the field obtained by extending the rational numbers by the values of the character of $V$.  The {\it Schur index} $s_V$ of $V$ is the smallest positive integer such that there exists a degree $s_V$ field extension $L_V$ of $K_V$  over which $V$ can be defined.

It is classically known that for each rational irreducible representation $W$ of $H$ there is a complex irreducible representation $V$ of $H$ such that \begin{equation} \label{llave}W \otimes_{\mathbb{Q}}\mathbb{C}  \cong (\oplus_{\sigma} V^{\sigma}) \oplus \stackrel{s_V}{\cdots} \oplus (\oplus_{\sigma} V^{\sigma})= s_V (\oplus_{\sigma} V^{\sigma})\end{equation}where the sum $\oplus_{\sigma} V^{\sigma}$ is taken over the Galois group associated to $\mathbb{Q} \le K_V.$ We shall say that $V$ is {\it associated} to $W.$ Two complex irreducible representations associated to the same rational irreducible representation are termed {\it Galois associated}.

Let $N$ be a subgroup of $H$ and consider$$V^N:=\{v \in V : \rho(h)(v)=v \,\, \mbox{for all} \,\, h \in N\},$$the vector subspace of $V$ consisting of those elements fixed under $N;$ let $d_V^N$ denote its dimension. By Frobenius Reciprocity theorem, $$d_V^N= \langle \rho_N , V \rangle_H$$where $\rho_N$ stands for the representation of $H$ induced by the trivial one of $N,$ and the brackets for the usual inner product of representations.

We refer to \cite{Serre} for further basic facts related to representations of groups.

\subsection{Complex tori and abelian varieties} A $g$-dimensional {\it complex torus} $X=V/\Lambda$ is the quotient of a $g$-dimensional complex vector space $V$ by a maximal rank discrete subgroup $\Lambda$. Each complex torus is an abelian group and a $g$-dimensional compact connected complex analytic manifold. {\it Homomorphisms} between  complex tori are holomorphic maps which are also group homomorphisms; we shall denote by $\mbox{End}(X)$ the ring of {\it endomorphisms} of $X,$ that is, homomorphisms of $X$ into itself. An {\it isogeny} is a surjective homomorphism with finite kernel;  isogenous tori are denoted by $X_1 \sim X_2.$ The isogenies of a complex torus $X$ into itself are  the invertible elements of the algebra of rational endomorphisms  $$\mbox{End}_{\mathbb{Q}}(X):=\mbox{End(X)} \otimes_{\mathbb{Z}} \mathbb{Q}.$$

An {\it abelian variety} is a complex torus which is also a complex projective algebraic variety. The Jacobian variety $JS$ of a compact Riemann surface $S$ is an (irreducible principally polarized) abelian variety; its dimension is the genus of $S.$ By the well-known Torelli's theorem, two compact Riemann surfaces are isomorphic if and only if their Jacobians are isomorphic (as principally polarized) abelian varieties.

Associated to every covering map $\pi : C \to S$ between compact Riemann surfaces there are two homomorphism; namely, the {\it pull-back} and the {\it norm} of $\pi:$ $$\pi^* : JS \to JC \hspace{0.5 cm} \mbox{and}\hspace{0.5 cm}N_{\pi}: JC \to JS.$$ As $\pi^*(JS)$ is an abelian subvariety of $JC$ isogenous to $JS,$ Poincar\'e's Reducibility theorem implies that there exists an abelian subvariety $P(C/S)$ of $JC$ such that  $$JC \sim JS \times P(C/S).$$

The factor $P(C/S)$ (which is termed the {\it Prym variety} associated to $\pi$) agrees with the connected component of zero of the kernel of the norm $N_{\pi}.$

We refer to \cite{bl} and \cite{debarre} for basic material on this topic.

\subsection{Group algebra decomposition theorem}
Let us suppose that $H$ is a finite group and that $W_1, \ldots, W_r$ are its rational irreducible representations.  It is classically known that each action $\epsilon_{H,S}: H \to \mbox{Aut}(S)$ of $H$ on $S$ induces a $\mathbb{Q}$-algebra homomorphism  $$\Phi_{H,S} : \mathbb{Q} [H]\to \text{End}_{\mathbb{Q}}(JS)$$where $\mathbb{Q}[H]$ stands for the rational group algebra of $H$ (see, for example \cite[p. 431]{bl}).

For every  $\alpha \in {\mathbb Q}[H]$ we define the abelian subvariety $$A_{\alpha} := {\rm Im} (\alpha)=\Phi_{H, S} (l\alpha)(JS) \subset JS$$where $l$ is some positive integer chosen such that $\Phi_{H,S}(l\alpha) \in \mbox{End}(JS)$.

Let $e_i$ be the idempotent of $\mathbb{Q}[H]$ given by
$$
e_i=\frac{d_{V_i}}{|H|} \sum_{h \in H} \mbox{tr}_{K_{V_i} |\mathbb{Q}} (\chi_{V_i}(h^{-1}))h,
$$
where $V_i$ is a complex irreducible representation of $H$ associated to $W_i$, of degree $d_{V_i}$, and $\mbox{tr}_{K_{V_i} |\mathbb{Q}}$ is the trace of the extension $\mathbb{Q} \le K_{V_i}.$ Then the equality $1 = e_1 + \cdots + e_r$ yields an isogeny $$JS \sim A_{e_1} \times \cdots \times A_{e_r}$$which is $H$-equivariant; the factors $A_i:=A_{e_i}$ above are called the {\it isotypical factors} of $JS$ with respect to $H$. See \cite{l-r}.

Additionally, there are $n_i=d_{V_i}/s_{V_i}$ idempotents $f_{i1},\dots, f_{in_i}$ in $\mathbb{Q}[H]$ such that $e_i=f_{i1}+\dots +f_{in_i},$ which provide $n_i$ pairwise isogenous abelian subvarieties of $JS.$ Let $B_i$ be one of them, for every $i.$ Thus $A_i \sim B_{i}^{n_i}$ and therefore the isogeny
\begin{equation} \label{lg}
JS \sim_H B_{1}^{n_1} \times \cdots \times B_{r}^{n_r}
\end{equation}is obtained. This last isogeny is called the {\it group algebra decomposition} of $JS$ with respect to $H;$ see \cite{cr}.

If the representations are labeled in such a way that $W_1(=V_1)$ denotes the trivial one (as we will do in this paper) then $n_1=1$ and $B_{1} \sim JS_H.$

\s

Let $N$ be a subgroup of $H$ and consider the associated regular covering map $S \to S_N.$ It was proved in \cite{cr} that the group algebra decomposition of $JS$ with respect to $H$ induces the following isogeny decomposition of $JS_N:$   \begin{equation*}
JS_N \sim  B_{1}^{{n}_1^N} \times \cdots \times B_{r}^{n_r^N} \,\,\, \mbox{ where } \,\,\, {n}_i^N=d_{V_i}^N/s_{V_i} \, .
\end{equation*}

The isogeny above provides a criterion to identify if a factor in the group algebra decomposition of $JS$ with respect to $H$ is isogenous to the Jacobian variety of a quotient of $S$ or isogenous to the Prym variety of an intermediate covering of $\pi_H$. More precisely, if two  subgroups $N \le N'$ of $H$ satisfy $$d_{V_i}^{N}- d_{V_i}^{N'}=s_{V_i}$$for some fixed $2 \le i \le r$ and $$d_{V_l}^{N} - d_{V_l}^{N'} = 0$$for all $l \neq i$
such that $\dim(B_{l}) \neq 0,$ then  \begin{equation*}
B_{i} \sim P(S_{N} / S_{N'}).
\end{equation*}Furthermore if, in addition, the genus of $S_{N'}$ is zero then $
B_{i} \sim JS_{N}.$ See also \cite{yo}.

\s

Assume that $(\gamma; m_1, \ldots, m_l)$ is the signature for the action of $H$ on $S$  and that the tuple $(a_1, \ldots, a_{\gamma}, b_1, \ldots,
b_{\gamma}, c_1, \ldots,c_l)$ is a generating vector representing this action. Following \cite[Theorem 5.12]{yoibero}, the dimension of $B_{i}$ in (\ref{lg}) is
\begin{equation}\label{dimensiones}
\dim (B_{i})=k_{V_i}\Big[d_{V_i}(\gamma -1)+\frac{1}{2}\Sigma_{k=1}^l (d_{V_i}-d_{V_i}^{\langle c_k \rangle} )\Big]  \end{equation}for $2 \le i \le r, $ where $k_{V_i}$ is the degree of the extension $\mathbb{Q} \le L_{V_i}.$

\section{Two basic algebraic lemmata} \label{lemmata}
Let $G$ be a finite group, let $K$ be a normal subgroup of $G$ and let $\Phi : G \to H$ be a surjective homomorphism of groups whose kernel is $K.$

\s

If $\rho: H \to \mbox{GL}(V)$ is a complex representation of $H$ then by precomposing $\rho$ by $\Phi$ we obtain a complex representation $$\tilde{\rho}:= \rho \circ \Phi : G \to \mbox{GL}(V)$$ of $G$ whose kernel contains $K.$ Conversely, given a complex representation $\tilde{\rho}: G \to \mbox{GL}(V)$ of $G$ which is trivial in $K,$ for $h \in H$ we define $$\rho(h):=\tilde{\rho}(g),$$where $g$ is chosen in such a way that $\Phi(g)=h.$ It is easy to see that $\rho$ is well-defined.

\begin{rema} \mbox{} \label{mouse} \it
\begin{enumerate}
\item[(a)] We shall write $V$ and $\tilde{V}$ instead of $\rho$ and $\tilde{\rho}$ respectively.
\item[(b)] Throughout the paper we shall use repeatedly the following obvious remark. After fixing a basis of the vector space $V$, the sets of matrices $$\{\rho(h): h \in H\} \hspace{0.5 cm} \mbox{and} \hspace{0.5 cm}\{\tilde{\rho}(g): g \in G\}$$agree, showing that the character fields, the degrees and the Schur indices of the representations $V$ and $\tilde{V}$ agree.
\end{enumerate}
\end{rema}

The next lemma is a particular case of \cite[Theorem 11.25]{Curtis}; we include a substantially simpler proof that fits  this context better.

\begin{lemm} \label{cargador}
Let $G$ be a finite group and let $K$ be a normal subgroup of $G.$  If $H=G/K$ then the correspondence $V \mapsto \tilde{V}$ defines a bijection between
\begin{enumerate}
\item[(a)] the set of complex irreducible representations of $H$, and
\item[(b)] the set of complex irreducible representations of $G$ that are trivial in $K.$
\end{enumerate}
\end{lemm}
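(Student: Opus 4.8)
The plan is to show that the assignment $V \mapsto \tilde{V}$ is a bijection by exhibiting an explicit inverse and then checking that it restricts to irreducible objects on both sides. First I would verify, at the level of \emph{all} complex representations, that the two constructions preceding the lemma are mutually inverse. Given $\rho : H \to \mathrm{GL}(V)$, the associated $\tilde{\rho} = \rho \circ \Phi$ is trivial on $K$ because $K = \ker \Phi$; conversely, given $\tilde{\rho} : G \to \mathrm{GL}(V)$ trivial on $K$, the prescription $\rho(h) := \tilde{\rho}(g)$ with $\Phi(g) = h$ is well defined precisely because $\Phi(g) = \Phi(g')$ forces $g' \in gK$ and hence $\tilde{\rho}(g') = \tilde{\rho}(g)$. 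Composing the two constructions in either order returns the original representation, so $V \mapsto \tilde{V}$ is a bijection between all representations of $H$ and all representations of $G$ that are trivial on $K$.

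Second, I would upgrade this to a bijection of \emph{equivalence classes}. Since $\chi_{\tilde{V}}(g) = \mathrm{tr}\,\tilde{\rho}(g) = \mathrm{tr}\,\rho(\Phi(g)) = \chi_V(\Phi(g))$ and $\Phi$ is surjective, the characters $\chi_{\tilde{V}_1}$ and $\chi_{\tilde{V}_2}$ agree on $G$ if and only if $\chi_{V_1}$ and $\chi_{V_2}$ agree on $H$. As equivalence of complex representations is detected by characters, $V_1 \cong V_2$ if and only if $\tilde{V}_1 \cong \tilde{V}_2$, so the correspondence is well defined and injective on isomorphism classes.

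The crux, and the only genuinely representation-theoretic step, is that the correspondence preserves irreducibility in both directions. Here I would invoke Remark \ref{mouse}(b): after fixing a basis of $V$, the set of operators $\{\rho(h) : h \in H\}$ coincides with $\{\tilde{\rho}(g) : g \in G\}$, the surjectivity of $\Phi$ giving one inclusion and the identity $\tilde{\rho}(g) = \rho(\Phi(g))$ the other. Consequently a linear subspace $W \subseteq V$ is $\rho(H)$-invariant if and only if it is $\tilde{\rho}(G)$-invariant, since the two sets of operators are literally the same subset of $\mathrm{GL}(V)$. Hence $V$ admits a proper nonzero invariant subspace under $H$ exactly when $\tilde{V}$ does under $G$, so $V$ is irreducible if and only if $\tilde{V}$ is. Combining this with the first two steps shows that $V \mapsto \tilde{V}$ restricts to a bijection between the complex irreducible representations of $H$ and the complex irreducible representations of $G$ trivial on $K$, which is the assertion of the lemma. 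I expect no serious obstacle: the whole argument rests on the equality of operator sets, and once that is recorded the preservation of invariant subspaces, and therefore of irreducibility, is immediate.
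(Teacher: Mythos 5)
Your proof is correct, and at the decisive step it takes a genuinely different route from the paper's. The paper also reduces the lemma to showing that irreducibility is preserved in both directions, but it does so through character theory: it invokes the criterion that a complex representation $U$ of a finite group $\mathcal{G}$ is irreducible if and only if $\langle U, U \rangle_{\mathcal{G}} = 1$, and then uses the identity $\langle V, V \rangle_H = \langle \tilde{V}, \tilde{V} \rangle_G$, which follows in one line because each fiber of $\Phi$ has exactly $|K|$ elements. You instead argue directly with invariant subspaces: since $\{\rho(h) : h \in H\}$ and $\{\tilde{\rho}(g) : g \in G\}$ are literally the same set of operators (Remark \ref{mouse}(b)), a subspace of $V$ is invariant under one action if and only if it is invariant under the other, so irreducibility transfers both ways. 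Your argument is more elementary---it needs no orthogonality relations, only the definition of irreducibility---and it is more general: it works verbatim over any base field, so the same reasoning would prove Lemma \ref{azul} (the rational analogue) directly, whereas the paper must deduce that lemma separately by appealing to the structure \eqref{llave} of rational irreducible representations; the paper's criterion $\langle U, U \rangle_{\mathcal{G}} = 1$ is specific to the complex (algebraically closed, characteristic zero) setting. What the paper's argument buys is brevity inside the character-theoretic framework it already uses throughout; what yours buys is field-independence and self-containedness. Your preliminary steps---checking that the two constructions are mutually inverse on all representations, and that equivalence classes match via $\chi_{\tilde{V}} = \chi_V \circ \Phi$---are also correct, and in fact slightly more careful than the paper, which takes the bijection on arbitrary representations as given and only addresses the restriction to irreducibles.
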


\begin{proof}
We only have to prove that the rule $V \mapsto \tilde{V}$ restricts to a bijection at the level of complex representations that are irreducible. To do this, we recall the well-known fact that the irreducibility of a representation $U$ of a group $\mathcal{G}$ is equivalent to $\langle U, U\rangle_{\mathcal{G}}=1.$ Now, the proof is clear after noticing that $$\langle V, V \rangle_H=\langle \tilde{V}, \tilde{V} \rangle_G$$for every complex representation $V$ of $H.$
\end{proof}

\begin{lemm} \label{azul}
Let $G$ be a finite group and let $K$ be a normal subgroup of $G.$  If $H=G/K$ then the correspondence  $V \mapsto \tilde{V}$ defines a bijection between
\begin{enumerate}
\item[(a)] the set of rational irreducible representations of $H$, and
\item[(b)] the set of rational irreducible representations of $G$ that are trivial in $K.$
\end{enumerate}
\end{lemm}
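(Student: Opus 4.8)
The plan is to bootstrap the statement from the complex case, Lemma \ref{cargador}, using the fundamental relation (\ref{llave}) to pass from complex irreducible representations to rational ones. First I would make the correspondence explicit at the rational level: if $W \colon H \to \mathrm{GL}(n,\mathbb{Q})$ is a rational representation, then $\tilde W := W \circ \Phi \colon G \to \mathrm{GL}(n,\mathbb{Q})$ is again defined over $\mathbb{Q}$ and trivial in $K$, and conversely any rational representation of $G$ trivial in $K$ factors through $\Phi$; so $W \mapsto \tilde W$ is a well-defined bijection between rational representations of $H$ and rational representations of $G$ trivial in $K$. The content to be proved is that it restricts to the irreducible ones.

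The key observation is that $V \mapsto \tilde V$ is equivariant for the relevant Galois actions. Since $\chi_{\tilde V} = \chi_V \circ \Phi$, for every $\sigma$ in the Galois group of $\mathbb{Q} \le K_V$ one has $\chi_{\tilde V^{\sigma}} = \sigma \circ \chi_V \circ \Phi = \chi_{\widetilde{V^{\sigma}}}$, so that $\tilde V^{\sigma} \cong \widetilde{V^{\sigma}}$; together with Remark \ref{mouse}, which guarantees $K_V = K_{\tilde V}$ and $s_V = s_{\tilde V}$, this says that the bijection of Lemma \ref{cargador} carries the Galois orbit $\{V^{\sigma}\}$ of $V$ exactly onto the Galois orbit $\{\tilde V^{\sigma}\}$ of $\tilde V$.

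Now I would combine this with (\ref{llave}). Let $W$ be a rational irreducible representation of $H$ with associated complex irreducible $V$, so $W \otimes_{\mathbb{Q}} \mathbb{C} \cong s_V(\oplus_{\sigma} V^{\sigma})$. Extension of scalars commutes with precomposition by $\Phi$, hence $\tilde W \otimes_{\mathbb{Q}} \mathbb{C} \cong (W \otimes_{\mathbb{Q}} \mathbb{C}) \circ \Phi \cong s_{\tilde V}(\oplus_{\sigma} \tilde V^{\sigma})$, using the previous paragraph. The right-hand side is precisely the complexification prescribed by (\ref{llave}) for the rational irreducible representation of $G$ associated to the complex irreducible $\tilde V$; since a rational representation is determined up to equivalence by its complexification (Noether--Deuring), $\tilde W$ must equal that representation. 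In particular $\tilde W$ is rational irreducible and, as $\tilde V$ is trivial in $K$, so is $\tilde W$.

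Finally I would check bijectivity. Injectivity follows because $\tilde W_1 \cong \tilde W_2$ forces their associated complex irreducibles $\tilde V_1, \tilde V_2$ to be Galois associated, whence $\tilde V_1 \cong \widetilde{V_2^{\sigma}}$ for some $\sigma$, and Lemma \ref{cargador} gives $V_1 \cong V_2^{\sigma}$, so $W_1 \cong W_2$. For surjectivity, starting from a rational irreducible $W'$ of $G$ trivial in $K$ with associated complex irreducible $V'$, one notes that $K$-triviality is detected by the character and is Galois-stable, so $V'$ is trivial in $K$ and hence equals $\tilde V$ for a unique complex irreducible $V$ of $H$ by Lemma \ref{cargador}; the construction above then shows $\tilde W \cong W'$, where $W$ is the rational irreducible of $H$ associated to $V$. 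The main obstacle, and the heart of the argument, is the Galois-equivariance of the bijection of Lemma \ref{cargador}; once that compatibility is in place, everything reduces to transporting (\ref{llave}) across $\Phi$.
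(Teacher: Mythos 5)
Your proof is correct and follows essentially the same route as the paper's: both bootstrap from the complex case of Lemma \ref{cargador} via the Galois-equivariance $\widetilde{(V^{\sigma})}=(\tilde{V})^{\sigma}$, the invariance of character fields and Schur indices from Remark \ref{mouse}, and the construction \eqref{llave} of rational irreducible representations. The paper's proof simply leaves implicit the details you spell out, namely the Noether--Deuring step identifying $\tilde{W}$ with the rational irreducible associated to $\tilde{V}$, and the explicit injectivity and surjectivity checks.
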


\begin{proof}
If $V_1, V_2$ are representations of $H,$ then it is straightforward to check that $$ \widetilde{V_1 \oplus V_2}=\tilde{V}_1 \oplus \tilde{V}_2 \hspace{0.5 cm} \mbox{and} \hspace{0.5 cm}      \widetilde{(V_1^{\sigma})}=(\tilde{V}_1)^{\sigma}$$for every $\sigma$ in the Galois group associated to the extension $\mathbb{Q} \le K_{V_1}=K_{\tilde{V}_1}.$ The result follows directly from Lemma \ref{cargador}, from Remark \ref{mouse} and from the way in which the rational irreducible representations of a group are constructed (see \eqref{llave}).
\end{proof}

\section{Stamement and proof of the results} \label{cuerpo}

Let $S$ be a compact Riemann surface and let $H$ be a finite group acting on $S.$ In this section we assume the existence of a compact Riemann surface $C$ such that:

\begin{enumerate}
\item[(a)] there exists a regular covering map $\pi : C \to S$, and that
\item[(b)] $C$ admits the action of a supergroup $G$ of the deck group $K$ of $\pi,$ in such a way that $K$ is normal in $G$ and $G/K \cong H.$
\end{enumerate}

 Note that $$S_H \cong (C/K)/(G/K) \cong 	C_G $$as Riemann surfaces, and the following diagram commutes.$$\begin{tikzpicture}[node distance=2 cm, auto]
  \node (P) {$C$};
  \node (Q) [right of=P, node distance=2.5 cm] {$S$};
  \node (A) [below of=P, node distance=1 cm] {$C_G$};
  \node (B) [below of=Q, node distance=1 cm] {$S_H$};
  \draw[->] (P) to node  {$\pi$} (Q);
  \draw[->] (A) to node {$\cong$} (B);
  \draw[->] (P) to node [swap] {$\pi_G$} (A);
  \draw[->] (Q) to node  {$\pi_H$} (B);
\end{tikzpicture}$$

\begin{prop} \label{vgvg} Let $\Phi : G \to H$ be a surjective homomorphism of groups whose kernel is $K.$ If $$\sigma=(a_1, \ldots, a_{\gamma}, b_1, \ldots,
b_{\gamma}, c_1, \ldots,c_l)$$is a generating vector of type $(\gamma; m_1, \ldots, m_l)$ representing the action of $G$ on $C,$ then the tuple$$\Phi(\sigma)=(\Phi(a_1), \ldots, \Phi(a_{\gamma}), \Phi(b_1), \ldots,
\Phi(b_{\gamma}), \Phi(c_1), \ldots,\Phi(c_l))$$is a generating vector of type $$(\gamma; \tfrac{m_1}{d_1}, \ldots, \tfrac{m_l}{d_l} )$$and represents the action of $H$ on $S,$ where $d_j=|\langle c_j \rangle \cap K|.$
\end{prop}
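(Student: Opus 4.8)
The plan is to verify the three defining conditions (a), (b), (c) of a generating vector directly for the tuple $\Phi(\sigma)$, together with the claimed type, and to check that the resulting signature is consistent with the geometry. First I would observe that condition (a) is immediate: since $\sigma$ generates $G$ and $\Phi$ is surjective, the images $\Phi(a_i), \Phi(b_i), \Phi(c_j)$ generate $\Phi(G) = H$. Condition (c), the long relation $\prod_i [\Phi(a_i), \Phi(b_i)] \prod_j \Phi(c_j) = 1$, follows by applying the homomorphism $\Phi$ to the corresponding relation $\prod_i [a_i, b_i] \prod_j c_j = 1$ satisfied by $\sigma$, since $\Phi$ respects products and inverses and sends $1$ to $1$.

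The substantive point is condition (b) together with the computation of the type, i.e.\ that $\Phi(c_j)$ has order exactly $m_j/d_j$, where $d_j = |\langle c_j\rangle \cap K|$. The key step here is the standard isomorphism theorem computation: the order of $\Phi(c_j)$ is the order of the image of $c_j$ in $G/K$, which equals the index $[\langle c_j\rangle : \langle c_j\rangle \cap K]$. Since $c_j$ has order $m_j$, the cyclic group $\langle c_j\rangle$ has order $m_j$, and $\langle c_j\rangle \cap K$ is a subgroup of order $d_j$, so this index is $m_j/d_j$. In particular one should note that $d_j$ divides $m_j$, so the quotient is a genuine positive integer, and that the branch points with $m_j/d_j = 1$ (i.e.\ where $\langle c_j\rangle \subseteq K$) become unbranched for the $H$-action, consistently with the commuting diagram relating $\pi_G$, $\pi_H$ and $\pi$.

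It then remains to justify that $\Phi(\sigma)$ genuinely represents the action of $H$ on $S$, and that the genus of the quotient stays $\gamma$. For the latter I would invoke the commuting diagram: since $S_H \cong C_G$ as Riemann surfaces and $\gamma$ is the genus of $C_G$ (the quotient of $C$ by the full action of $G$), the genus of $S_H$ is again $\gamma$; this is why the first entry of the type is unchanged. For the former, having produced a generating vector of the stated type satisfying (a)--(c), Riemann's existence theorem guarantees that $H$ acts with this signature, and the compatibility of $\Phi$ with the given $G$-action on $C$ (via the quotient $C \to C/K = S$) identifies this as the action of $H$ on $S = C_K$ induced by the $G$-action. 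I expect the main obstacle to be purely bookkeeping: carefully matching the combinatorially defined generating vector $\Phi(\sigma)$ with the geometrically defined $H$-action on $S$, ensuring that the quotient map $\pi_H$ induced by $\Phi(\sigma)$ coincides with the one coming from the diagram rather than merely sharing the same signature.
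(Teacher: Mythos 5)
Your verification of the combinatorial conditions is correct as far as it goes: generation of $H$ follows from surjectivity of $\Phi$, the long relation is preserved because $\Phi$ is a homomorphism, the order of $\Phi(c_j)$ equals $[\langle c_j\rangle : \langle c_j\rangle\cap K]=m_j/d_j$, and the genus of the quotient is $\gamma$ because $S_H\cong C_G$. But this only shows that $\Phi(\sigma)$ is a generating vector of $H$ of the stated type; it does not show that it \emph{represents the action} of $H$ on $S$, which is the actual content of the proposition. The gap sits exactly where you defer to ``bookkeeping'': nothing in your argument computes the branch data of the geometric action of $H=G/K$ on $S=C_K$, i.e.\ shows that $\pi_H$ is branched precisely over the images of the branch values of $\pi_G$, with ramification orders $m_j/d_j$ and no others. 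Riemann's existence theorem cannot fill this in: it runs in the opposite direction, producing \emph{some} surface with \emph{some} $H$-action realizing a given generating vector, and says nothing about the particular action at hand. Knowing that the abstract element $\Phi(c_j)$ has order $m_j/d_j$ in $H$ is not the same as knowing that the stabilizer in $H$ of a point of $S$ lying below a point of $C$ with $G$-stabilizer conjugate to $\langle c_j\rangle$ has order $m_j/d_j$.

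That missing computation is the body of the paper's proof, and it is genuinely geometric rather than bookkeeping. The paper fixes a branch value $p_j$ of $\pi_G$, uses the fact (quoted from Rojas) that the multiplicity of $\pi:C\to S$ at a point $q_{j,i}\in\pi_G^{-1}(p_j)$ equals $|g_i\langle c_j\rangle g_i^{-1}\cap K|$, and then invokes normality of $K$ to rewrite this as $|g_i(\langle c_j\rangle\cap K)g_i^{-1}|=d_j$, independent of the point in the fiber; a count then shows that $\pi_G^{-1}(p_j)$ maps onto $|H|/(m_j/d_j)$ points of $S$, forcing $\pi_H$ to have ramification exactly $m_j/d_j$ there. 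Equivalently, in stabilizer language: one must check that the $H$-stabilizer of $\pi(q)$ is the image $\Phi(G_q)$ of the $G$-stabilizer of $q$ (an element $gK$ fixes the $K$-orbit of $q$ if and only if $g\in KG_q$, which uses normality of $K$), and that $|\Phi(G_q)|=m_j/d_j$ uniformly over the fiber (which uses normality of $K$ again, since \emph{a priori} $|g\langle c_j\rangle g^{-1}\cap K|$ could depend on $g$). Your proposal uses the normality of $K$ only to form the quotient group $G/K$; the paper's proof uses it twice more in an essential way, and that extra use is precisely what upgrades your order computation for $\Phi(c_j)$ into a computation of the signature of the action on $S$.
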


\begin{proof} Let $p_j$ denote the branch value of $\pi_G$ marked with $m_j.$ Then, over $p_j$ by $\pi_G$ there are exactly $t_j:=|G|/m_j$ different points; say $$\pi_G^{-1}(p_j)=\{q_{j,1}, \ldots, q_{j,t_j}\}.$$ Following \cite[Section 3.1]{yoibero}, the multiplicity $d_{j,i}$ of $\pi$ at $q_{j,i}$ agrees with the order of the intersection group $$g_i \langle c_j \rangle g_i^{-1} \cap K$$for a suitable representative $g_i$ of the left cosets of the normalizer of $\langle c_j \rangle$ in $G.$

The normality of $K$ in $G$ implies that $$d_{j,i} :=|g_i(\langle c_j \rangle \cap K)g_i^{-1}|=|\langle c_j \rangle \cap K|$$
is independent of $i$, so we can set $d_j=d_{j,i}$ ; note that $d_j$ divides $m_j$.

It follows that $\pi_G^{-1}(p_j)$ is sent by $\pi$ to $$(|G|/m_j)/(|K|/d_j)=|H|/(m_j/d_j)$$different points in $S.$ This, in turn, says that $\pi_H$ has a branch value marked with $m_j/d_j$ and, in the end, that the signature of the action of $H$ on $S$ is $$(\gamma; \tfrac{m_1}{d_1}, \ldots, \tfrac{m_l}{d_l} ).$$

Now, the proof is done after noticing that the elements of the tuple $\Phi(\sigma)$ generate $H,$ that the order of $\Phi(c_j)$ is $m_j/d_j,$ and that $\Phi(1)=1.$
\end{proof}

Let us employ the following notation:$$\mbox{End}_{\mathbb{Q}, G}(JC):=\Phi_{G,C} (\mathbb{Q}[G]) \subset \mbox{End}_{\mathbb{Q}}(JC)$$ and $$\mbox{End}_{\mathbb{Q}, H}(JS):=\Phi_{H,S} (\mathbb{Q}[H]) \subset \mbox{End}_{\mathbb{Q}}(JS).$$

\begin{prop} \label{azucar}
Let $\Phi : G \to H$ be a surjective homomorphism of groups whose kernel is $K.$ Then $\Phi$ induces surjective $\mathbb{Q}$-algebra homomorphisms
$$\hat{\Phi} :  \mathbb{Q}[G] \to \mathbb{Q}[H] \hspace{0.5 cm} \mbox{and} \hspace{0.5 cm} \check{\Phi} :\mbox{End}_{\mathbb{Q}, G}(JC) \to \mbox{End}_{\mathbb{Q}, H}(JS),$$
that make the following diagram commutative:$$\begin{tikzpicture}[node distance=2 cm, auto]
  \node (P) {$\mathbb{Q}[G]$};
  \node (Q) [right of=P, node distance=3 cm] {$\mbox{End}_{\mathbb{Q}, G}(JC)$};
  \node (A) [below of=P, node distance=1.3 cm] {$\mathbb{Q}[H]$};
  \node (B) [below of=Q, node distance=1.3 cm] {$\mbox{End}_{\mathbb{Q}, H}(JS)$};
  \draw[->] (P) to node {$\Phi_{G,C}$} (Q);
  \draw[->] (A) to node {$\Phi_{H,S}$} (B);
  \draw[->] (P) to node [swap] {$\hat{\Phi}$} (A);
  \draw[->] (Q) to node  {$\check{\Phi}$} (B);
\end{tikzpicture}$$
\end{prop}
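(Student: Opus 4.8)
The plan is to construct $\hat\Phi$ first and then to descend to $\check\Phi$ by pushing forward through the norm map of $\pi$. For the top arrow, I would simply take $\hat\Phi$ to be the $\mathbb{Q}$-linear extension of the rule $g \mapsto \Phi(g)$. Since $\Phi$ is a surjective homomorphism of groups, $\hat\Phi$ is automatically a surjective homomorphism of $\mathbb{Q}$-algebras; its kernel is the two-sided ideal generated by the elements $k-1$ with $k \in K$. This part is routine and requires no geometry.

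The substance lies in constructing $\check\Phi$ and proving that the square commutes. The idea is to force commutativity by \emph{defining}, for $\alpha \in \mathbb{Q}[G]$, the value $\check\Phi(\Phi_{G,C}(\alpha)):=\Phi_{H,S}(\hat\Phi(\alpha))$; the only real issue is well-definedness, that is, the kernel containment $\ker \Phi_{G,C} \subseteq \ker(\Phi_{H,S}\circ\hat\Phi)$. To obtain it I would bring in the norm map $N_{\pi} : JC \to JS$. The crucial geometric input is the equivariance $\pi \circ g = \Phi(g) \circ \pi$, valid for every $g \in G$ because $K$ is the deck group of $\pi$ and the induced action of $G$ on $S \cong C_K$ factors through $G/K \cong H$. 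Passing to Jacobians and using the functoriality of the norm (namely $N_{\pi}=\pi_{*}$ on divisor classes), this yields, after extending $\mathbb{Q}$-linearly, the intertwining identity
$$N_{\pi} \circ \Phi_{G,C}(\alpha) = \Phi_{H,S}(\hat\Phi(\alpha)) \circ N_{\pi} \hspace{0.5 cm}\mbox{for all } \alpha \in \mathbb{Q}[G].$$

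Granting this identity, the kernel containment is immediate: if $\Phi_{G,C}(\alpha)=0$ then $\Phi_{H,S}(\hat\Phi(\alpha)) \circ N_{\pi}=0$, and since $N_{\pi}$ is surjective --which follows from $N_{\pi} \circ \pi^{*}= |K| \cdot \mbox{id}_{JS}$-- we conclude $\Phi_{H,S}(\hat\Phi(\alpha))=0$. Hence $\check\Phi$ is well-defined. That it is a $\mathbb{Q}$-algebra homomorphism is then formal, as it is induced on the quotient by the algebra homomorphisms $\hat\Phi$ and $\Phi_{H,S}$; its surjectivity follows from the surjectivity of $\hat\Phi$ together with the identification $\mbox{End}_{\mathbb{Q}, H}(JS)=\Phi_{H,S}(\mathbb{Q}[H])$; and the commutativity of the square holds by the very definition of $\check\Phi$.

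The main obstacle I anticipate is pinning down the intertwining identity with the correct conventions: one must fix precisely how $\mbox{Aut}(C)$ and $\mbox{Aut}(S)$ act on the respective Jacobians inside $\Phi_{G,C}$ and $\Phi_{H,S}$, and verify that it is the norm (rather than the pull-back) that conjugates the $G$-action into the $H$-action through $\Phi$. An equivalent route replaces $N_{\pi}$ by $\pi^{*}$, invoking the injectivity of $\pi^{*}$ (again a consequence of $N_{\pi}\circ\pi^{*}=|K|\cdot\mbox{id}_{JS}$) in place of the surjectivity of $N_{\pi}$; in either version the whole proposition reduces to the single equivariance relation $\pi \circ g = \Phi(g)\circ\pi$.
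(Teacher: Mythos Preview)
Your proposal is correct and follows essentially the same construction as the paper: extend $\Phi$ linearly to get $\hat\Phi$, and define $\check\Phi$ so that the square commutes by declaration. The paper's proof is actually more cavalier than yours---it writes down the formula for $\check\Phi$ and simply asserts that ``it is not difficult to check'' that it is a well-defined surjective $\mathbb{Q}$-algebra homomorphism, without addressing the kernel containment $\ker\Phi_{G,C}\subseteq\ker(\Phi_{H,S}\circ\hat\Phi)$ at all.

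Your argument fills that gap via the intertwining identity $N_{\pi}\circ\Phi_{G,C}(\alpha)=\Phi_{H,S}(\hat\Phi(\alpha))\circ N_{\pi}$, which is precisely the content of the paper's \emph{next} proposition (Proposition~\ref{normco}). So you have in effect merged the two propositions, using the geometric equivariance to justify what the paper leaves as an unproven remark. This is a legitimate and arguably cleaner route; the only cost is a slight reordering of the exposition, since the paper prefers to first \emph{define} $\check\Phi$ formally and only afterwards record its compatibility with $N_{\pi}$.
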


\begin{proof} The map $\hat{\Phi}$ is the natural extension of $\Phi$ by linearity; namely $$ \mathbb{Q}[G] \ni \sum_{g \in G} \lambda_g g \mapsto \sum_{g \in G} \lambda_g \Phi(g)  =\sum_{h \in H} \mu_h h \in \mathbb{Q}[H]$$where \begin{equation} \label{nu}\mu_h=\sum_{g \in G, \Phi(g)=h} \lambda_g.\end{equation}

It is not a difficult task to check that this is, in fact, a surjective homomorphism between $\mathbb{Q}$-algebras.

Since a typical element of $\mbox{End}_{\mathbb{Q}, G}(JC)$ is of the form $\Sigma_{g \in G} \lambda_g \Phi_{G,C}(g),$ in a similar way as before, we define $\check{\Phi}$ by the rule $$\mbox{End}_{\mathbb{Q}, G}(JC) \ni \sum_{g \in G} \lambda_g \Phi_{G,C}(g) \mapsto \sum_{g \in G} \lambda_g \Phi_{H,S}(\Phi(g)),$$and the last expresion can be rewritten as $$\sum_{h \in H} \mu_h \Phi_{H,S}(h)  \in \mbox{End}_{\mathbb{Q}, H}(JS)$$with $\mu_h$ as defined in \eqref{nu}. Again, it is not difficult to check that $\check{\Phi}$ is a surjective homomorphism between $\mathbb{Q}$-algebras, and the diagram commutes by construction.
\end{proof}

\s

We recall that associated to the covering map $\pi: C \to S$ there is a surjective homomorphism between the corresponding Jacobians: $$N_{\pi}: JC \to JS,$$called the {\it norm} of $\pi.$ This map corresponds to the {\it push-forward} with respect to $\pi,$ when we regard the Jacobian variety as the group of (equivalence classes of) degree zero divisors (or line bundles) on the Riemann surface; see e.g. \cite[Chapter 11]{bl}.

\begin{prop} \label{normco}
For every $\varphi \in \mbox{End}_{\mathbb{Q}, G}(JC)$ the following diagram commutes:$$\begin{tikzpicture}[node distance=2 cm, auto]
  \node (P) {$JC$};
  \node (Q) [right of=P, node distance=3 cm] {$JC$};
  \node (A) [below of=P, node distance=1.3 cm] {$JS$};
  \node (B) [below of=Q, node distance=1.3 cm] {$JS$};
  \draw[->] (P) to node {$\varphi$} (Q);
  \draw[->] (A) to node {$\check{\Phi}(\varphi)$} (B);
  \draw[->] (P) to node [swap] {$N_{\pi}$} (A);
  \draw[->] (Q) to node  {$N_{\pi}$} (B);
\end{tikzpicture}$$
\end{prop}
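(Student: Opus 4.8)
The plan is to reduce the statement to the level of single group elements and then to interpret everything geometrically through the push-forward description of the norm. By Proposition \ref{azucar} every $\varphi \in \mbox{End}_{\mathbb{Q}, G}(JC)$ is of the form $\varphi = \sum_{g \in G}\lambda_g \Phi_{G,C}(g)$, and $\check{\Phi}(\varphi) = \sum_{h \in H}\mu_h \Phi_{H,S}(h)$ with $\mu_h = \sum_{\Phi(g)=h}\lambda_g$ as in \eqref{nu}. Hence $N_\pi \circ \varphi$ and $\check{\Phi}(\varphi)\circ N_\pi$ are $\mathbb{Q}$-linear combinations of the maps $N_\pi \circ \Phi_{G,C}(g)$ and $\Phi_{H,S}(\Phi(g))\circ N_\pi$ respectively. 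Grouping the terms of $N_\pi\circ\varphi$ according to the value $h=\Phi(g)$ exactly reproduces the coefficients $\mu_h$, so it suffices to establish the single identity
\begin{equation*}
N_\pi \circ \Phi_{G,C}(g) = \Phi_{H,S}(\Phi(g))\circ N_\pi \qquad \mbox{for every } g \in G.
\end{equation*}

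The key geometric input is that $\pi$ intertwines the two group actions. Since $K$ is the deck group of $\pi$, I would identify $S$ with $C/K$ and $\pi$ with the quotient map; as $K$ is normal in $G$, the $G$-action on $C$ descends to the action of $G/K \cong H$ on $C/K \cong S$, and this descended action is precisely the one entering $\Phi_{H,S}$, compatibly with the commutative diagram preceding Proposition \ref{vgvg}. Writing $\alpha_g$ for the automorphism of $C$ attached to $g$ and $\beta_h$ for the automorphism of $S$ attached to $h$, the definition of the descended action gives the pointwise equivariance
\begin{equation*}
\pi \circ \alpha_g = \beta_{\Phi(g)} \circ \pi \qquad \mbox{for every } g \in G.
\end{equation*}

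I would then finish by invoking the push-forward description of the norm recalled just before the statement. Regarding $JC$ and $JS$ as groups of degree-zero divisor classes, $N_\pi$ is the push-forward of divisors along $\pi$, while $\Phi_{G,C}(g)$ and $\Phi_{H,S}(h)$ are induced by $\alpha_g$ and $\beta_h$ on divisor classes. Because push-forward is covariant and functorial under composition, the equivariance above yields
\begin{equation*}
N_\pi \circ \Phi_{G,C}(g) = (\pi\circ\alpha_g)_* = (\beta_{\Phi(g)}\circ\pi)_* = \Phi_{H,S}(\Phi(g))\circ N_\pi,
\end{equation*}
which is the required identity; summing over the fibres $\Phi^{-1}(h)$ weighted by the $\lambda_g$ then gives $N_\pi \circ \varphi = \check{\Phi}(\varphi)\circ N_\pi$.

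The main obstacle I anticipate is bookkeeping the functorial conventions rather than any genuine difficulty. One must ensure that $\Phi_{G,C}(g)$ and $\Phi_{H,S}(h)$ are taken with the same variance (both as push-forwards, say) so that the composition closes against the push-forward $N_\pi$; if instead the pull-back convention is in force, one uses that for an automorphism $\alpha$ one has $\alpha^* = (\alpha^{-1})_*$, together with $\beta_{\Phi(g)^{-1}}^{-1} = \beta_{\Phi(g)}$, to absorb the inversion and recover the same conclusion. Once the variance is fixed consistently, the functoriality of the norm under composition and the equivariance of $\pi$ do all the work.
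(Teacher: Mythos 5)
Your proposal is correct and follows essentially the same route as the paper: both arguments rest on the equivariance $\pi \circ \epsilon_{G,C}(g) = \epsilon_{H,S}(\Phi(g)) \circ \pi$ (a consequence of the normality of $K$ in $G$) combined with the push-forward description of $N_{\pi}$ on degree-zero divisors, and the regrouping of coefficients $\lambda_g$ into $\mu_h$ is exactly the paper's computation. The only cosmetic difference is that you reduce to single group elements and invoke functoriality of push-forward, whereas the paper writes out the divisor computation explicitly; the content is identical.
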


\begin{proof}
Let $p_1, \ldots, p_t$ be different points of $C$ and consider the degree zero divisor $$D=\sum_{i=1}^t n_ip_i$$as a point of $JC.$ If $\varphi=\sum_{g \in G} \lambda_g \Phi_{G,C}(g) \in \mbox{End}_{\mathbb{Q}, G}(JC)$ then $\varphi(D)$ is defined as $$\varphi(D)=\sum_{g \in G} \lambda_g \sum_{i=1}^t n_i \epsilon_{G,C}(g)(p_i), $$where $\epsilon_{G,C} : G \to \mbox{Aut}(C)$ is the monomorphism defining the action of $G$ on $C.$

Thus \begin{equation} \label{guita} (N_{\pi} \circ \varphi )(D)= \sum_{g \in G} \lambda_g \sum_{i=1}^t  n_i  \pi \circ \epsilon_{G,C}(g)(p_i).\end{equation}

Now, as $K$ is a normal subgroup of $G,$ we have that $$\epsilon_{H,S}(h) \circ \pi = \pi \circ \epsilon_{G,C}(g)$$where $\epsilon_{H,S}: H \to \mbox{Aut}(S)$  is the monomorphism defining the action of $H$ on $S,$ and $h=\Phi(g).$ It follows that the equality \eqref{guita} can be rewritten as $$(N_{\pi} \circ \varphi) (D)= \sum_{h \in H} \mu_h \sum_{i=1}^t  n_i   \epsilon_{H,S}(h) \circ \pi (p_i)$$where $\mu_h$ is as in \eqref{nu}. The last expression equals  $$\sum_{h \in H} \mu_h \epsilon_{H,S}(h) \, \pi \sum_{i=1}^t  n_ip_i = ( \check{\Phi}(\varphi) \circ N_{\pi})(D)$$and the proof follows.
\end{proof}

\s

We are now in position to state and prove the main result of this paper:

\s

\begin{theo} \label{theolindo}
Let $S$ be a compact Riemann surface with action of a finite group $H,$ and let \begin{equation} \label{e1}JS \sim_H JS_H \times B_2^{n_2} \times \cdots \times B_r^{n_r}\end{equation}be the group algebra decomposition of $JS$ with respect to  $H.$

\s

Assume the existence of a compact Riemann surface $C$ such that: \begin{enumerate}
\item[(a)] there exists a regular covering map $\pi : C \to S$, and
\item[(b)] $C$ admits the action of a supergroup $G$ of the deck group $K$ of $\pi,$ in such a way that $K$ is normal in $G$ and $G/K \cong H.$
\end{enumerate}Then the group algebra decomposition of $JC$ with respect to $G$ is $$JC \sim_G (JS_H \times {B}_2^{n_2} \times \cdots \times {B}_r^{n_r}) \times (\tilde{B}_{r+1}^{\tilde{n}_{r+1}} \times \cdots \times \tilde{B}_s^{\tilde{n}_s})$$where the factors $\tilde{B}_i,$ for $r+1 \le i \le s,$ are associated to the rational irreducible representations of $G$ that are not trivial in $K.$
\end{theo}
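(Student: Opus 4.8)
The plan is to exploit the two structural facts already established: Lemma~\ref{azul}, which says the rational irreducible representations of $H$ correspond bijectively (via $V \mapsto \tilde V$) to the rational irreducible representations of $G$ that are trivial in $K$, and Proposition~\ref{normco}, which says the norm map $N_\pi$ intertwines each $\varphi \in \mbox{End}_{\mathbb{Q},G}(JC)$ with $\check\Phi(\varphi) \in \mbox{End}_{\mathbb{Q},H}(JS)$. Let $W_1, \ldots, W_r$ be the rational irreducible representations of $H$, with associated complex $V_1, \ldots, V_r$ and idempotents $e_1, \ldots, e_r \in \mathbb{Q}[H]$; and let $\tilde W_1, \ldots, \tilde W_r, W_{r+1}, \ldots, W_s$ be those of $G$, where by Lemma~\ref{azul} the first $r$ are exactly the ones trivial in $K$ and the remaining $W_{r+1}, \ldots, W_s$ are the ones not trivial in $K$. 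This gives the group algebra decomposition of $JC$ as $JC \sim_G \tilde B_1^{\tilde n_1} \times \cdots \times \tilde B_r^{\tilde n_r} \times \tilde B_{r+1}^{\tilde n_{r+1}} \times \cdots \times \tilde B_s^{\tilde n_s}$, so the essential content is to identify the first $r$ factors with those of $JS$.

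First I would record that the multiplicities match: by Remark~\ref{mouse}(b) the degrees and Schur indices satisfy $d_{\tilde V_i} = d_{V_i}$ and $s_{\tilde V_i} = s_{V_i}$, hence $\tilde n_i = d_{\tilde V_i}/s_{\tilde V_i} = d_{V_i}/s_{V_i} = n_i$ for $1 \le i \le r$. Next, since the idempotent $e_i$ is built from $d_{V_i}$, $|H|$ and the Galois trace of $\chi_{V_i}$, and the corresponding idempotent $\tilde e_i \in \mathbb{Q}[G]$ is built identically from $\tilde V_i$, the construction of $\hat\Phi$ in Proposition~\ref{azucar} (which sends $g \mapsto \Phi(g)$ and averages coefficients over fibers) should send $\tilde e_i$ to $e_i$: the matrices agree under $V \mapsto \tilde V$, and the fibering of $G$ over $H$ by $K$ simply distributes the coefficient of $h$ across its $|K|$ preimages. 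I would verify $\hat\Phi(\tilde e_i) = e_i$ by a direct coefficient comparison using \eqref{nu}, which is the natural compatibility between the two idempotent formulas.

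The heart of the argument is then to show $\tilde B_i \sim B_i$ for $2 \le i \le r$ (and $\tilde B_1 \sim JS_H$, already guaranteed since $C_G \cong S_H$). The clean way is via the commuting square of Proposition~\ref{azucar}: applying $\Phi_{G,C}$ to $\tilde e_i$ and $\Phi_{H,S}$ to $e_i$ are related through $\check\Phi$, so $\check\Phi(\Phi_{G,C}(\tilde e_i)) = \Phi_{H,S}(\hat\Phi(\tilde e_i)) = \Phi_{H,S}(e_i)$. Combined with Proposition~\ref{normco}, the norm $N_\pi$ then carries the isotypical summand $A_{\tilde e_i} = \mbox{Im}(\Phi_{G,C}(\tilde e_i)) \subset JC$ onto $A_{e_i} = \mbox{Im}(\Phi_{H,S}(e_i)) \subset JS$. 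The main obstacle is upgrading this surjection to an isogeny onto each factor, i.e. controlling the kernel: I expect to argue that $\pi^* \colon JS \to JC$ embeds $A_{e_i}$ as an abelian subvariety mapped isomorphically (up to isogeny) onto $A_{\tilde e_i}$ for the $K$-trivial indices, because $\pi^*$ and $N_\pi$ are adjoint and $\pi^*(JS)$ is precisely the $K$-invariant part of $JC$ — so that the factors from $K$-trivial representations live inside $\pi^*(JS)$ and the complementary factors $\tilde B_{r+1}, \ldots, \tilde B_s$ assemble into the Prym $P(C/S)$. Establishing that $\pi^*(JS)$ coincides with the sum of the isotypical pieces indexed by the $K$-trivial representations, and hence splitting $JC \sim \pi^*(JS) \times P(C/S)$ compatibly with the group algebra decomposition, is the step requiring the most care; the idempotent and norm computations above are routine once that identification is in place.
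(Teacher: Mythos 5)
Your proposal coincides with the paper's own proof in all but the last step: the multiplicity matching via Remark \ref{mouse}, the identity $\hat{\Phi}(\tilde{e}_i)=e_i$, and the use of Propositions \ref{azucar} and \ref{normco} to obtain $N_{\pi}(\tilde{A}_i)=A_i$ are, respectively, the paper's Claim 2 and Claim 3. The two arguments part ways at the point you call the main obstacle. The paper resolves it with a dimension count (its Claim 1): by Proposition \ref{vgvg} the generating vector $\sigma$ of the $G$-action maps to a generating vector $\Phi(\sigma)$ of the $H$-action, the fixed subspaces satisfy $d_{\tilde{V}_i}^{\langle c_k\rangle}=d_{V_i}^{\langle \Phi(c_k)\rangle}$, and formula \eqref{dimensiones} then gives $\dim\tilde{B}_i=\dim B_i$; a surjective homomorphism between abelian varieties of equal dimension is automatically an isogeny, and the theorem follows. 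You instead propose to control $\ker N_{\pi}$ by identifying $\pi^*(JS)$ with the sum of the $K$-trivial isotypical components, but you leave precisely that identification unproved; as submitted, the proposal therefore has a gap at its decisive step.

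The gap is fillable, and doing so yields a proof genuinely different from the paper's. Since $K$ is normal in $G$, the element $p_K=\tfrac{1}{|K|}\sum_{k\in K}k$ is a central idempotent of $\mathbb{Q}[G]$, and $p_K=\tilde{e}_1+\cdots+\tilde{e}_r$: indeed $\tilde{e}_ip_K$ is a central idempotent and $\tilde{e}_i$ is primitive central, so $\tilde{e}_ip_K\in\{0,\tilde{e}_i\}$, and by Clifford's theorem (the same argument as in the Remark following Theorem \ref{theolindo}) one has $\tilde{e}_ip_K=\tilde{e}_i$ exactly when $\tilde{V}_i$ is trivial on $K$. Next, for the Galois cover $\pi$ one has $\pi^*\circ N_{\pi}=\sum_{k\in K}\Phi_{G,C}(k)=|K|\,\Phi_{G,C}(p_K)$, so that $\pi^*(JS)={\rm Im}\,\Phi_{G,C}(p_K)$, which is the precise form of your ``$K$-invariant part'' claim. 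Finally, the pull-back satisfies $\Phi_{G,C}(g)\circ\pi^*=\pi^*\circ\Phi_{H,S}(\Phi(g))$ (proved exactly as Proposition \ref{normco}, with the push-forward replaced by the pull-back), so your idempotent computation gives $\Phi_{G,C}(\tilde{e}_i)\circ\pi^*=\pi^*\circ\Phi_{H,S}(e_i)$, whence $$\tilde{A}_i=\Phi_{G,C}(\tilde{e}_i)(JC)=\Phi_{G,C}(\tilde{e}_i)\bigl({\rm Im}\,\Phi_{G,C}(p_K)\bigr)=\Phi_{G,C}(\tilde{e}_i)\bigl(\pi^*(JS)\bigr)=\pi^*(A_i).$$ As $\ker\pi^*$ is finite (it lies in the $|K|$-torsion, because $N_{\pi}\circ\pi^*=|K|\,{\rm id}_{JS}$), the restriction $\pi^*|_{A_i}:A_i\to\tilde{A}_i$ is an isogeny, and cancelling the common exponent $n_i$ (Poincar\'e complete reducibility) gives $\tilde{B}_i\sim B_i$. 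Comparing the two routes: the paper's is shorter but leans on the signature comparison of Proposition \ref{vgvg} and on the dimension formula \eqref{dimensiones} of \cite{yoibero}; yours is purely algebraic, never mentions signatures or generating vectors, and has the bonus of exhibiting $\tilde{A}_{r+1}\times\cdots\times\tilde{A}_s$ inside $\ker N_{\pi}$, i.e.\ of identifying the product of the complementary factors with the Prym variety $P(C/S)$ directly, which the paper only observes a posteriori.
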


\begin{proof} Let $W_1 \ldots, W_r$ be the rational irreducible representations of $H$. By Lemma \ref{azul}, the representations $\tilde{W}_1, \ldots, \tilde{W}_r$ of $G$ are precisely the rational irreducible representations of $G$ that are  trivial in $K$.  If we denote by $\tilde{W}_{r+1}, \ldots, \tilde{W_s}$ the rational irreducible representations of $G$ that are not trivial in $K,$ then the group algebra decomposition of $JC$ with respect to $G$ is \begin{equation} \label{blanco}JC \sim_G (JC_G \times \tilde{B}_2^{\tilde{n}_2} \times \cdots \times \tilde{B}_r^{\tilde{n}_r}) \times ( \tilde{B}_{r+1}^{\tilde{n}_{r+1}} \times \ldots \times  \tilde{B}_s^{\tilde{n}_s}),\end{equation}where the factor $\tilde{B}_i$ is associated to $\tilde{W}_i$ and $\tilde{n}_i=d_{\tilde{V}_i}/s_{\tilde{V}_i}.$

As $S_H \cong C_G$ and as $\tilde{n}_i =n_i$ for $2 \le i \le r$ (see Remark \ref{mouse}), the isogeny \eqref{blanco} can be rewritten as \begin{equation} \label{piojo}JC \sim_G (JS_H \times \tilde{B}_2^{{n}_2} \times \cdots \times \tilde{B}_r^{{n}_r}) \times ( \tilde{B}_{r+1}^{\tilde{n}_{r+1}} \times \ldots \times  \tilde{B}_s^{\tilde{n}_s}),\end{equation}and therefore to prove the theorem we have to show that $\tilde{B}_i$ and $B_i$ are isogenous, for each $2 \le i \le r.$ To accomplish this task, we proceed in three steps.

\s

{\bf Claim 1.} The dimensions of $B_i$ and $\tilde{B}_i$ agree for every $2 \le i \le r.$

\s

Let us suppose the signature of the action of $G$ on $C$ to be $(\gamma; m_1, \ldots, m_l)$ and  $$\sigma=(a_1, \ldots, a_{\gamma}, b_1, \ldots,
b_{\gamma}, c_1, \ldots,c_l)$$to be a generating vector representing this action. Note that for every $2 \le i \le r$ and $1 \le k \le l$ we have $$ \tilde{\rho_i}(c_k)=(\rho_i \circ \Phi)(c_k)= \rho_i (\Phi(c_k))$$ and, consequently, the subspace of $V_i$ fixed under the subgroup $\langle c_k \rangle$ of  $G$ $$\tilde{V}_i^{\langle c_k \rangle}=\{v \in V_i : \tilde{\rho_i}(c_k)(v)=v\}$$agrees with the subspace of $V_i$ fixed under the subgroup $\langle \Phi(c_k) \rangle$ of $H$ $$V_i^{\langle \Phi(c_k) \rangle}=\{v \in V_i : {\rho_i}(\Phi(c_k))(v)=v\}.$$

By Proposition \ref{vgvg},  the signature of the action of $H$ on $S$ is $$(\gamma; \tfrac{m_1}{d_1}, \ldots, \tfrac{m_l}{d_l})$$ and the tuple $\Phi(\sigma)$ is a generating vector representing this action. Now, we employ the aforementioned generating vectors in order to apply the formula (\ref{dimensiones}) to \eqref{e1} and \eqref{blanco}. It follows that\begin{multline*}
\dim (B_{i}) = k_{V_i}\Big[d_{V_i}(\gamma -1)+\frac{1}{2}\sum_{k=1}^l (d_{V_i}-d_{V_i}^{\langle \Phi(c_k))})\Big]  \\
 =k_{\tilde{V}_i}\Big[d_{\tilde{V}_i}(\gamma -1)+\frac{1}{2}\sum_{k=1}^l (d_{\tilde{V}_i}-d_{\tilde{V}_i}^{\langle c_k)})\Big] =\dim(\tilde{B}_i)
  \end{multline*}and the proof of Claim 1 is done.

\s
\s

Let us now consider the isotypical factors $\tilde{A}_i \sim \tilde{B}_i^{n_i}$ and ${A}_i \sim {B}_i^{n_i}$ of $JC$ and $JS$ with respect to $G$ and $H$ respectively, for each $2 \le i \le r.$

We recall that $\tilde{A}_i$ and ${A}_i$ agree with the image of the endomorphisms $\Phi_{G,C}(\tilde{e}_i)$ and $\Phi_{H,S}(e_i)$ respectively, where $$ \tilde{e}_i=\frac{d_{\tilde{V}_i}}{|G|} \sum_{g \in G} \mbox{tr}_{K_{\tilde{V}_i} |\mathbb{Q}} (\chi_{\tilde{V}_i}(g^{-1}))g\hspace{ 0.4 cm} 	\mbox{and} \hspace{ 0.4 cm}    e_i=\frac{d_{V_i}}{|H|} \sum_{h \in H} \mbox{tr}_{K_{V_i} |\mathbb{Q}} (\chi_{V_i}(h^{-1}))h. $$

\s

{\bf Claim 2.} $\hat{\Phi}(\tilde{e}_i)=e_i$ for every $2 \le i \le r.$

\s

The next equality follows directly from the definition of $\hat{\Phi}:$
$$\hat{\Phi}(\tilde{e}_i) = \frac{d_{\tilde{V}_i}}{|G|} \sum_{g \in G} \mbox{tr}_{K_{\tilde{V}_i} |\mathbb{Q}}(\chi_{\tilde{V}_i}(g^{-1})){\Phi}(g).$$

Furthermore, the fact that $\tilde{V}_i$ is trivial in $K$ together with Remark \ref{mouse} imply$$\sum_{g \in G, \, \Phi(g)=h_0} \mbox{tr}_{K_{\tilde{V}_i} |\mathbb{Q}}(\chi_{\tilde{V}_i}(g^{-1}))=|K| \mbox{tr}_{{K}_{V_i} |\mathbb{Q}}(\chi_{{V}_i}(h_0^{-1})$$for each $h_0 \in H.$ Thereby

$$\hat{\Phi}(\tilde{e}_i)=\frac{d_{{V}_i}}{|G|} \sum_{h \in H} |K| \mbox{tr}_{K_{V_i} |\mathbb{Q}}(\chi_{{V}_i}(h^{-1}))h =e_i.$$

\s

{\bf Claim 3.} The norm $N_{\pi}$ restricts to a homomorphism $\tilde{A}_i \to A_i$ for each $2 \le i \le r.$

\s

Propositions \ref{azucar} and \ref{normco} together with Claim 2 imply that the equality $$\Phi_{H,S}({e}_i) \circ N_{\pi}= N_{\pi} \circ \Phi_{G,C}(\tilde{e}_i)$$holds, for each $2 \le i \le r.$ By considering images, we obtain that $$A_i=N_{\pi}(\tilde{A}_i)$$proving Claim 3.

\s

Finally, from Claim 1 follows the fact that the factors $\tilde{A}_i$ and $A_i$ have the same dimension, showing that the homomorphism $N_{\pi}: \tilde{A}_i \to A_i$ is, in fact, an isogeny; consequently $\tilde{B}_i$ and $B_i$ are isogenous for every $2 \le i \le r.$

This brings the proof to an end.
\end{proof}

\begin{rema} \it As $S$ is isomorphic to the quotient $C_K,$ the isogeny decomposition \eqref{piojo} yields the following isogeny:
\begin{eqnarray}
JS &\sim&  (JS_H \times \tilde{B}_2^{{n}_2^K} \times \cdots \times \tilde{B}_r^{{n}_r^K}) \times ( \tilde{B}_{r+1}^{\tilde{n}_{r+1}^K} \times \cdots \times  \tilde{B}_s^{\tilde{n}_s^K}) \\
 \, &=& (JS_H \times \tilde{B}_2^{{n}_2} \times \cdots \times \tilde{B}_r^{{n}_r}) \times ( \tilde{B}_{r+1}^{\tilde{n}_{r+1}^K} \times \cdots \times  \tilde{B}_s^{\tilde{n}_s^K}).  \label{taza}\end{eqnarray}

Note that from the comparison of dimensions in the isogenies \eqref{e1} and \eqref{taza} it follows that the product $$\tilde{B}_{r+1}^{\tilde{n}_{r+1}^K} \times \cdots \times  \tilde{B}_s^{\tilde{n}_s^K}$$must be zero.

The previous observation  says that, if $\tilde{V}$ is a representation of $G$ which is not trivial in $K$ then $\tilde{V}$ is ``totally non-trivial". More precisely: \begin{equation} \label{todon} n_{\tilde{V}}^K \neq n_{\tilde{V}} \implies  n_{\tilde{V}}^K=0.\end{equation}

The implication above can be understood in purely algebraic terms, as follows.

\s

Let  $\tilde{V}$ be a complex irreducible representation to $G.$ Following \cite[p. 264]{Suzuki}, as $K$ is a normal subgroup of $G,$ the restriction $\tilde{V}|_{K}$ of $\tilde{V}$ to $K$ decomposes in terms of irreducible representations of $K$ as \begin{equation} \label{ddecc} m (\rho_1 \oplus \cdots \oplus \rho_{\nu}),\end{equation}where $m$ is some non-negative integer and $\rho_1, \ldots, \rho_{\nu}$ are conjugate representations of $\rho_1.$

If we assume that $n_{\tilde{V}}^K \neq 0$ then the trivial representation $\chi_{0,K}$ of $K$ must appear in the decomposition \eqref{ddecc}. Without loss of generality, we can suppose $ \rho_1 = \chi_{0,K}$ and therefore, as $\rho_2, \ldots \rho_s$ are conjugate representations of $\rho_1$, we obtain that $$ \tilde{V}|_{K} \cong (m \nu) \chi_{0,K}.$$

Note that $m \nu$ equals the degree $d_{\tilde{V}}$ of $\tilde{V}$ and, by Frobenius Reciprocity theorem, we obtain that  $$d_{\tilde{V}}^K=\langle \tilde{V}, \rho_K \rangle_G=\langle \tilde{V}|_{K}, \chi_{0,K} \rangle_K=\langle d_{\tilde{V}} \chi_{0,K} , \chi_{0,K} \rangle_K= d_{\tilde{V}}$$showing that $n_{\tilde{V}}^K = n_{\tilde{V}},$ as desired.

\s

In terms of $\mathbb{Q}$-algebras, the implication \eqref{todon} reveals the existence of a surjective $\mathbb{Q}$-algebra homomorphism $$a: \mathbb{Q}[G]\cong \mathbb{Q}[G]^K \oplus R \to \mathbb{Q}[H \cong G/K]$$such that $a(R)=0,$ and $a|_{\mathbb{Q}[G]^K}$ is an isomorphism onto $\mathbb{Q}[H].$ More precisely, if $$G=g_1 K \cup \ldots \cup g_dK$$ is a decomposition of $G$ into left cosets of $K$, with $d=|H|,$ then $a$ is defined as: $$e_{\tilde{W}} \mapsto \frac{1}{|G|}\sum_{g \in G} \tilde{\alpha}_g \Phi (g)=\frac{1}{|G|}\sum_{i=1}^d \tilde{\alpha}_{g_i} \Phi(g_i) \sum_{k \in K} \tilde{\alpha}_k,$$with $\tilde{\alpha}_g=d_{\tilde{V}}\mbox{tr}_{K_{\tilde{V}} |\mathbb{Q}}(\chi_{\tilde{V}}(g^{-1})).$

Note that $R$ is generated by those elements $e_{\tilde{W}} \in \mathbb{Q}[G]$ with $\tilde{W}$ being not trivial in $K.$ In this case the result \eqref{todon} ensures that $$n_{\tilde{V}}^K=\frac{1}{|K|}\sum_{k \in K} \chi_{\tilde{V}}(k)=0.$$ In particular $$ \sum_{k \in K} \tilde{\alpha}_k=d_{\tilde{V}} \sum_{k \in K}\mbox{tr}_{K_{\tilde{V}} |\mathbb{Q}}(\chi_{\tilde{V}}(k^{-1}))=0$$ showing that $a(R)=0.$

By contrast, as $\mathbb{Q}[G]^K$ is generated by those elements $e_{\tilde{W}} \in \mathbb{Q}[G]$ with $\tilde{W}$ being trivial in $K,$ it is easy to see that the restriction of $a$ to $\mathbb{Q}[G]^K$ is defined by $$e_{\tilde{W}} \to e_{{W}}$$showing that $a$ restricts to an isomorphism from $\mathbb{Q}[G]^K$ onto $\mathbb{Q}[H].$
\end{rema}

\s

According to the notation used in the proof of Theorem \ref{theolindo}, if $B_i$ is the abelian subvariety of $JS$ associated to the representation $W_i$ of $H$ then $\tilde{B}_i$ denotes the corresponding  abelian subvariety of $JC$ associated to the representation $\tilde{W}_i$ of $G.$

We have proved that $B_i$ and $\tilde{B}_i$ are isogenous. Moreover:

\begin{theo} \label{corolindo} Consider subgroups $N$ and $N'$ of $H$ such that $N \le N'$ and $i \in \{2, \ldots, r\}$. Then:
\begin{enumerate}
\item $$B_i \sim JS_N  \iff \tilde{B}_i \sim JC_{\Phi^{-1}(N)}.$$
\item $$B_i \sim P(S_N / S_{N'})  \iff  \tilde{B}_i \sim P(C_{\Phi^{-1}(N)} / C_{\Phi^{-1}(N')})  .$$
\end{enumerate}
\end{theo}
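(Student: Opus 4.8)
The plan is to reduce both biconditionals to the already-established isogeny $B_i \sim \tilde{B}_i$ of Theorem \ref{theolindo} by identifying, on the two sides of each equivalence, the relevant quotient surfaces. The crucial observation is that for every subgroup $N$ of $H$ the surface $C_{\Phi^{-1}(N)}$ is isomorphic to $S_N$; this is exactly the quotient-in-stages principle already used at the beginning of this section to identify $S_H \cong C_G$.

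First I would note that $\Phi^{-1}(N)$ is a subgroup of $G$ containing $K$, and since $K$ is normal in $G$ it is normal in $\Phi^{-1}(N)$, with $\Phi^{-1}(N)/K \cong N$ via $\Phi$ by the correspondence theorem. Taking the quotient of $C$ by $\Phi^{-1}(N)$ in two stages — first by $K$, which gives $C_K \cong S$ since $\pi$ is regular with deck group $K$, and then by the residual group $\Phi^{-1}(N)/K$, which acts on $S$ precisely as the subgroup $N$ of $H$ — yields $C_{\Phi^{-1}(N)} \cong S_N$, and likewise $C_{\Phi^{-1}(N')} \cong S_{N'}$. Moreover, because the inclusion $N \le N'$ corresponds to $\Phi^{-1}(N) \le \Phi^{-1}(N')$, these isomorphisms are compatible with the covering projections, so the square relating the covering $C_{\Phi^{-1}(N)} \to C_{\Phi^{-1}(N')}$ to the covering $S_N \to S_{N'}$ commutes.

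From this compatibility I would deduce the two identifications that drive the proof: $JC_{\Phi^{-1}(N)} \cong JS_N$, and — because isomorphic coverings induce norm maps that commute with the isomorphisms of the Jacobians, hence have corresponding kernels and corresponding connected components of zero — $P(C_{\Phi^{-1}(N)}/C_{\Phi^{-1}(N')}) \cong P(S_N/S_{N'})$. Both of these are in particular isogenies. With the identifications in hand the conclusion is formal: since $\tilde{B}_i \sim B_i$ by Theorem \ref{theolindo} and isogeny is an equivalence relation, for part (1) we have
$$\tilde{B}_i \sim JC_{\Phi^{-1}(N)} \iff \tilde{B}_i \sim JS_N \iff B_i \sim JS_N,$$
and for part (2) the identical chain, with $JS_N$ replaced by $P(S_N/S_{N'})$ and $JC_{\Phi^{-1}(N)}$ replaced by $P(C_{\Phi^{-1}(N)}/C_{\Phi^{-1}(N')})$, gives the equivalence.

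I do not expect a serious obstacle here; the single point demanding genuine care is the middle step, namely verifying that the intermediate quotient surfaces and their covering maps really coincide, so that the Jacobians and Prym varieties on the $C$-side are \emph{isomorphic} (not merely isogenous) to those on the $S$-side. Once this compatibility of the whole tower of coverings is established, transitivity of isogeny does the rest.
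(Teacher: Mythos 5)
Your proof is correct, but it follows a genuinely different route from the paper's. The paper stays inside the representation-theoretic framework: arguing as in Proposition \ref{vgvg}, it shows that a generating vector for the action of $\Phi^{-1}(N)$ on $C$ is mapped by $\Phi$ to one for the action of $N$ on $S$, deduces from this the equality $d_{\tilde{V}_j}^{\Phi^{-1}(N)}=d_{V_j}^{N}$ for all $2\le j\le r$, and then transfers across this equality the numerical criterion recalled in Section \ref{pre} (the conditions on the quantities $d_{V_j}^{N}$ and $d_{V_j}^{N}-d_{V_j}^{N'}$, together with genus conditions, that detect when a group-algebra factor is the Jacobian or a Prym of a quotient); the isomorphism $S_N\cong C_{\Phi^{-1}(N)}$ enters only at the very end, to match genera and dimensions. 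You instead exploit that isomorphism at full strength: quotienting in stages gives $C_{\Phi^{-1}(N)}\cong S_N$ compatibly with the covering tower, hence $JC_{\Phi^{-1}(N)}\cong JS_N$ and $P(C_{\Phi^{-1}(N)}/C_{\Phi^{-1}(N')})\cong P(S_N/S_{N'})$, after which Theorem \ref{theolindo} (namely $B_i\sim\tilde{B}_i$) and transitivity of isogeny settle both parts formally. Your route is shorter and bypasses a delicate point: the paper's proof uses the detection criterion as an equivalence (``this is equivalent to\dots''), whereas Section \ref{pre} states it only as a sufficient condition, so the paper implicitly relies on the stronger form from \cite{yo}; your argument never needs it. What the paper's route buys in exchange is the identity $d_{\tilde{V}_j}^{\Phi^{-1}(N)}=d_{V_j}^{N}$ itself, which is the computationally useful output (it is what gets applied in Example 1 to recognize $B_6\sim JS_N$ and lift it to $C_1$ and $C_2$), and which your proof does not produce. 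The only hypothesis your argument needs beyond Theorem \ref{theolindo} is that the identification $H\cong G/K$ is compatible with the actions, i.e. $\pi\circ\epsilon_{G,C}(g)=\epsilon_{H,S}(\Phi(g))\circ\pi$; this is part of the paper's standing setup (it is exactly what is used in Proposition \ref{normco}), and you correctly single out and verify the resulting compatibility of the covering squares, which is indeed the one place where care is required.
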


\begin{proof} By using the same arguments employed in the proof of Proposition \ref{vgvg}, it is not difficult to see that if $\sigma$ is a generating vector representing the action of $\Phi^{-1}(N)$ on $C$ then $\Phi(\sigma)$ is a generating vector representing the action of $N$ on $S.$ Thus, in a similar way as done in the proof of Theorem \ref{theolindo}, it can be seen that \begin{equation} \label{key} d_{\tilde{V}_j}^{\Phi^{-1}(N)}= d_{{V}_j}^N \hspace{0.4 cm}     \end{equation}for every $2 \le j \le r.$
Let us assume that $\tilde{B}_i \sim JC_{\Phi^{-1}(N)}.$ This is equivalent to the genus of $C_G$ being zero and  \begin{displaymath}
d_{\tilde{V}_j}^{\Phi^{-1}(N)}= \left\{ \begin{array}{ll}
 \,\,0 & \textrm{if $j \neq i$}\\
s_{\tilde{V}_i} & \textrm{if $j=i$}
  \end{array} \right.
\end{displaymath}for all $2 \le j \le r$ such that $\tilde{B}_j \neq 0.$ By \eqref{key} it is clear that  $B_i \sim JS_N.$

The converse is similar. Let us assume that $B_i \sim JS_N.$ This is equivalent to the genus of $S_H$ being zero and
\begin{displaymath}
d_{{V}_j}^N= \left\{ \begin{array}{ll}
 \,\,0 & \textrm{if $j \neq i$}\\
s_{V_i} & \textrm{if $j=i$}
  \end{array} \right.
\end{displaymath}for all $2 \le j \le r$ such that $B_j \neq 0.$  Now, \eqref{key} implies that $$JC_{\Phi^{-1}(N)} \sim \tilde{B}_i \times P_1$$for some abelian subvariety $P_1$ of $JC.$

As the dimensions of $B_i$ and $\tilde{B}_i$ agree, to prove part $(1)$ it is enough to check that the genera of $S_N$ and of $C_{\Phi^{-1}(N)}$ agree. This fact follows from $$S_N \cong (C/K)(\Phi^{-1}(N)/K) \cong C_{\Phi^{-1}(N)}.$$

\s

To prove part $(2)$ we proceed analogously. The equality \begin{equation*} \label{key2} d_{\tilde{V}_j}^{\Phi^{-1}(N)}-d_{\tilde{V}_j}^{\Phi^{-1}(N')}= d_{{V}_j}^N-d_{{V}_j}^{N'}\end{equation*}holds for every  $2 \le j \le r$ and the sufficient condition is clearly satisfied. Conversely, if $B_i \sim P(S_N /S_{N'})$ then$$P(C_{\Phi^{-1}(N)} / C_{\Phi^{-1}(N')}) \sim \tilde{B}_i \times P_2$$for some abelian subvariety $P_2$ of $JC.$ Now, as $B_i$ and $\tilde{B}_i$ have the same dimension, the result follows directly after noticing that the dimensions of $P(S_N / S_{N'})$ and of $P(C_{\Phi^{-1}(N)} /  C_{\Phi^{-1}(N')})$ agree.
\end{proof}

\section{Jacobian isogenous to product of Jacobians}\label{kani} By arguing as in the proof of Theorem \ref{theolindo}, we are able to derive conditions under which the Jacobian of a Riemann surface $C$ with group action can be decomposed as a product of Jacobians of quotients of $C$ by subgroups.

\begin{lemm}
Let $\tilde{W}$ be a rational irreducible representation of $G$ and let $\tilde{B}_{\tilde{W}}$ be the factor associated to it in the group algebra decomposition of $C$ with respect to $G.$

Assume the existence of two subgroups $K_1$ and $K_2$ of $G$ such that:
\begin{enumerate}
\item[(1)] $\tilde{W}$ is simultaneously trivial in $K_1$ and  $K_2$ and
\item[(2)] the genus of the quotient $C_{\langle K_1, K_2 \rangle}$ is zero.
\end{enumerate}

Then $\tilde{B}_{\tilde{W}}=0.$
\end{lemm}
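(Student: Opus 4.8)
The plan is to exploit the induced decomposition of the group algebra decomposition recalled in Section~\ref{pre}, applied to the subgroup $L:=\langle K_1,K_2\rangle$ of $G$. First I would observe that the hypothesis that $\tilde{W}$ is trivial in both $K_1$ and $K_2$ forces $\tilde{W}$ to be trivial in $L$: the kernel of $\tilde{W}$ is a subgroup of $G$ containing $K_1$ and $K_2$, and hence contains the subgroup they generate. By the way in which rational irreducible representations decompose into complex ones (see \eqref{llave}), the complex irreducible representation $\tilde{V}$ associated to $\tilde{W}$ is then also trivial in $L$, being a summand of $\tilde{W}\otimes_{\mathbb{Q}}\mathbb{C}$; consequently $\tilde{V}^{L}=\tilde{V}$ and therefore $d_{\tilde{V}}^{L}=d_{\tilde{V}}$.

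Next I would apply the induced decomposition of \cite{cr}, recalled in Section~\ref{pre}, to the subgroup $L$ of $G$ acting on $C$. This yields an isogeny $JC_{L}\sim \tilde{B}_1^{\tilde{n}_1^{L}}\times\cdots\times\tilde{B}_s^{\tilde{n}_s^{L}}$, where the multiplicity of each factor is $\tilde{n}_i^{L}=d_{\tilde{V}_i}^{L}/s_{\tilde{V}_i}$. In particular, the factor $\tilde{B}_{\tilde{W}}$ associated to $\tilde{W}$ occurs with multiplicity $\tilde{n}_{\tilde{W}}^{L}=d_{\tilde{V}}^{L}/s_{\tilde{V}}=d_{\tilde{V}}/s_{\tilde{V}}$, which is a positive integer since the degree of a complex irreducible representation is at least its Schur index.

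Finally I would use hypothesis (2): since the genus of $C_{L}=C_{\langle K_1,K_2\rangle}$ is zero, its Jacobian $JC_{L}$ is trivial, i.e.\ it has dimension zero. Comparing dimensions in the induced isogeny forces every factor appearing with positive multiplicity to vanish; in particular $\tilde{n}_{\tilde{W}}^{L}\cdot\dim\tilde{B}_{\tilde{W}}=0$ with $\tilde{n}_{\tilde{W}}^{L}\ge 1$, whence $\dim\tilde{B}_{\tilde{W}}=0$ and therefore $\tilde{B}_{\tilde{W}}=0$, as desired.

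The argument is short, and the only step requiring a little care is the first one: verifying that triviality of the rational representation $\tilde{W}$ on $L$ transfers to the full fixed space of the associated complex representation $\tilde{V}$, so that $\tilde{B}_{\tilde{W}}$ really appears with its full multiplicity $d_{\tilde{V}}/s_{\tilde{V}}\ge 1$ in the decomposition of $JC_{L}$. Everything else is a direct application of the induced decomposition of \cite{cr} together with the dimension count coming from the vanishing of $JC_{L}$.
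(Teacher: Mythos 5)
Your proof is correct and takes essentially the same route as the paper's: both pass the triviality of $\tilde{W}$ on $K_1$ and $K_2$ to the generated subgroup $L=\langle K_1, K_2\rangle$, invoke the induced decomposition of $JC_{L}$ from \cite{cr} so that $\tilde{B}_{\tilde{W}}$ appears there with its full multiplicity $d_{\tilde{V}}/s_{\tilde{V}}\ge 1$, and conclude from the vanishing of $JC_{L}$ (genus zero). The only difference is cosmetic: you spell out why the triviality transfers to the associated complex irreducible representation $\tilde{V}$, a point the paper leaves implicit in the equality $\tilde{n}^{\langle K_1,K_2\rangle}=\tilde{n}$.
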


\begin{proof} Let us assume that $\tilde{B}_{\tilde{W}}$ has positive dimension. It is clear that there exists an abelian subvariety $P$ of $JC$ such that $$JC \sim \tilde{B}_{\tilde{W}}^{\tilde{n}} \times P,$$where $\tilde{n}=d_{\tilde{V}}/s_{\tilde{V}}$ and $\tilde{V}$ is a complex irreducible representation of $G$ associated to $\tilde{W}.$ Now, as the representation $\tilde{W}$ is simultaneously trivial in $K_1$ and  $K_2$ then it is also trivial in the group $\langle K_1, K_2\rangle;$ or, equivalently, $$\tilde{n}^{\langle K_1, K_2\rangle}=\tilde{n}.$$ It follows that the induced isogeny decomposition of $JC_{\langle K_1, K_2 \rangle}$ is $$JC_{\langle K_1, K_2 \rangle} \sim  \tilde{B}_{\tilde{W}}^{\tilde{n}} \times Q$$for a suitable abelian subvariety $Q$ of $JC.$

The result is now clear because the genus of $C_{\langle K_1, K_2\rangle}$ is assumed to be zero.
\end{proof}

\begin{theo} \label{comoKR} Let us consider a compact Riemann surface $C$ with action of a group $G.$ If there exist normal subgroups $K_1, \ldots, K_t$ of $G$ such that the genus of ${C_{\langle K_i, K_j \rangle}}$ is zero, for every $1 \le i \neq j \le t,$ then
$$JC \sim JC_{K_1} \times \cdots \times JC_{K_t} \times P$$for some abelian subvariety $P$ of $JC.$ Moreover if, in addition, $g_C=\Sigma_{i=1}^tg_{C_{K_i}}$
then $$JC \sim JC_{K_1} \times \cdots \times JC_{K_t}.$$
\end{theo}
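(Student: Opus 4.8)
The plan is to read the two assertions directly off the group algebra decomposition of $JC$ with respect to $G$, matched against the induced decompositions of the quotients $JC_{K_i}$. Write the group algebra decomposition as $JC \sim_G \prod_j \tilde{B}_j^{\tilde{n}_j}$, and recall from Section~\ref{pre} that each normal subgroup $K_i$ yields the induced isogeny $JC_{K_i} \sim \prod_j \tilde{B}_j^{\tilde{n}_j^{K_i}}$ with $\tilde{n}_j^{K_i}=d_{\tilde{V}_j}^{K_i}/s_{\tilde{V}_j}$. The first thing I would record is the Clifford-type dichotomy established in the remark following Theorem~\ref{theolindo}: since $K_i$ is normal in $G$, one has $d_{\tilde{V}_j}^{K_i}\neq 0$ if and only if $\tilde{V}_j$ is trivial on $K_i$, and in that case $d_{\tilde{V}_j}^{K_i}=d_{\tilde{V}_j}$. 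Hence each factor $\tilde{B}_j$ appears in $JC_{K_i}$ either with multiplicity $0$ or with the full multiplicity $\tilde{n}_j$ it carries in $JC$.

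Next I would sort the nonzero factors among the quotients, and this is the step I expect to be the crux. By the preceding Lemma, a factor $\tilde{B}_j$ of positive dimension cannot have $\tilde{W}_j$ trivial on two distinct subgroups $K_i$ and $K_{i'}$: triviality on both, together with $g_{C_{\langle K_i,K_{i'}\rangle}}=0$, would force $\tilde{B}_j=0$. Thus each nonzero $\tilde{B}_j$ is trivial on at most one $K_i$, and I can partition the indices $j$ with $\tilde{B}_j\neq 0$ into pairwise disjoint sets $S_1,\ldots,S_t$ (those trivial on $K_i$) and a remainder $S_0$ (those trivial on no $K_i$). Combining this partition with the dichotomy above gives $JC_{K_i}\sim\prod_{j\in S_i}\tilde{B}_j^{\tilde{n}_j}$ exactly, and therefore
$$JC \sim \prod_{i=1}^{t} JC_{K_i}\times P, \qquad P:=\prod_{j\in S_0}\tilde{B}_j^{\tilde{n}_j},$$
which is the first assertion.

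One point to verify along the way is that the trivial factor $\tilde{B}_1\sim JC_G$ causes no trouble: being trivial on every $K_i$, it is killed by the same Lemma (equivalently, $g_{C_G}=0$ because $C_G$ is a quotient of $C_{\langle K_i,K_{i'}\rangle}$), so it does not occur among the nonzero $\tilde{B}_j$. Finally, for the second assertion I would simply compare dimensions in the displayed isogeny: it gives $g_C=\sum_{i=1}^{t}g_{C_{K_i}}+\dim P$, so the hypothesis $g_C=\sum_{i=1}^{t}g_{C_{K_i}}$ forces $\dim P=0$, hence $P=0$ and $JC\sim JC_{K_1}\times\cdots\times JC_{K_t}$.
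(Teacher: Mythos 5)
Your proof is correct and follows essentially the same route as the paper: you partition the rational irreducible representations according to which $K_i$ they are trivial on (the paper's sets $\mathscr{R}_l$, $\mathscr{R}_{ij}$, $\mathscr{C}$), kill those trivial on two of the subgroups via the preceding Lemma, compare with the induced decompositions of the $JC_{K_i}$, and finish with a dimension count. If anything, your explicit appeal to the Clifford-type dichotomy (each factor occurs in $JC_{K_i}$ with multiplicity $0$ or with its full multiplicity) to obtain $JC_{K_i}\sim\prod_{j\in S_i}\tilde{B}_j^{\tilde{n}_j}$ with no residual factor is tighter bookkeeping than the paper's, which carries unspecified complements $Q_l$ in the quotient decompositions.
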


\begin{proof}  The set of rational irreducible representations $\mathscr{R}$ of $G$ can be written as a disjoint union  $$\mathscr{R}=\bigcup_{j=1}^t \mathscr{R}_j \cup\bigcup_{\substack{i,j=1,\\  i \neq j }}^t \mathscr{R}_{ij} \cup \mathscr{C}$$where $\mathscr{R}_j$ is the subset consisting of those representations which are trivial in $K_j$ but not in $K_i$ for every $i \neq j,$ while $\mathscr{R}_{ij}$ is the subsets consisting of those representations which are simultaneously trivial in $K_i$ and $K_j,$ and $$\mathscr{C}=\mathscr{R}-(\bigcup_{j=1}^t \mathscr{R}_j \cup\bigcup_{\substack{i,j=1,\\  i \neq j }}^t \mathscr{R}_{ij}).$$

By the previous lemma, if $\tilde{W} \in \mathscr{R}_{ij}$ then $\tilde{B}_{\tilde{W}}=0$ and therefore  \begin{equation} \label{rojito} JC \sim Q_0 \times \prod_{j=1}^t  \prod_{\tilde{W} \in \mathscr{R}_j} \tilde{B}_{\tilde{W}}^{\tilde{n}_{\tilde{W}}}\end{equation}for some abelian subvariety $Q_0$ of $JC.$  Now, we note that in \eqref{rojito} $$\tilde{n}_{\tilde{W}}=\tilde{n}_{\tilde{W}}^{K_l}$$if and only if $\tilde{W} \in \mathscr{R}_l.$ Thereby, the induced isogeny decomposition of $JC_{K_l}$ is $$JC_{K_l} \sim Q_l \times \prod_{\tilde{W} \in \mathscr{R}_l} \tilde{B}_{\tilde{W}}^{\tilde{n}_{\tilde{W}}} $$for some abelian subvariety $Q_l$ of $JC,$ for each $1 \le l \le t. $ This shows that $$JC \sim JC_{K_1} \times \cdots \times JC_{K_t} \times P$$where $P:=Q_0 \times Q_1 \times \cdots \times Q_t,$ proving the first statement.

Clearly $P=0$ provided that $g_C=\Sigma_{i=1}^tg_{C_{K_i}}$ and the proof is done.
\end{proof}

\begin{rema} 	\it
In \cite{rubiyo} the authors have considered the same kind of decompositions, but on Riemann surfaces admitting actions of non necessarily normal subgroups; see also \cite{KR}.
\end{rema}

\section{Examples} \label{ejems}

\subsection{Example 1} Let $q \ge 3$ be a prime number. We consider a three-dimensional family of compact Riemann surfaces $S$ of genus $g_S=1+2q$ admitting a group of automorphism $H$ isomorphic to the dihedral group  $$\mathbf{D}_{2q}=\langle a, b : a^{2q}=b^2 = (ab)^2 =1\rangle$$of order $4q,$ acting on $S$ with signature $(0; 2, \stackrel{6}{\ldots}, 2).$ The tuple $$\tilde{\sigma} := (ba,ba,b,b,b,b)$$can be chosen as a generating vector of $H$ of the desired type.

\s

The complex irreducible representations of the dihedral groups are well-known. Namely, $\mathbf{D}_{2q}$ has four complex irreducible representation of degree one, given by $$V_1: a \mapsto 1, b \mapsto 1;   \hspace{0.4 cm} V_2: a \mapsto 1, \, b \mapsto -1;$$ $$V_3: a \mapsto -1, \, b \mapsto 1;  \hspace{0.4 cm} V_4: a \mapsto -1, \, b \mapsto -1,  $$ all of them being rationals; set $W_j=V_j.$ Additionally, it has $q-1$ complex irreducible representations of degree two:
$$
V_{k+4} : a \mapsto \mbox{diag}(\xi^k, \bar{\xi}^k),
 \,\,\,\,\, b \mapsto \left( \begin{smallmatrix}
0 & 1 \\
1 & 0
\end{smallmatrix} \right)$$where $\xi=\mbox{exp}(2 \pi i / 2q)$ and $1 \le k \le q-1.$

We also recall that the dihedral groups only possess representations with Schur index one. Hence, as the character field of each $V_l$, for $5 \le l \le q+3,$ has degree $(q-1)/2$ over $\mathbb{Q},$ they give rise to exactly two rational irreducible representations of $H,$ say $W_5$ and $W_6$ (associated to $V_5$ and $V_6$ respectively).

\s

Thus, the group algebra decomposition of $JS$ with respect to $H$ is $$JS \sim_H JS_H \times  B_{2} \times B_{3} \times B_4 \times B_5^2 \times B_6^2.$$ The dimension of the factors is summarized in the following table:

\s

\begin{center}
\begin{tabular}{|c|c|c|c|c|c|c|c|}  \hline
$\mbox{Factor}$ &  $JS_H$ & $B_2$  & $B_3$ & $B_4$  & $B_5$ & $B_6$ \\ \hline
$\mbox{Dimension}$  & 0 & 2 & 0 & 1 & $(q-1)/2$ & $(q-1)/2$ \\ \hline
\end{tabular}
\end{center}

\s

Now, in order to apply our results in two different situations, we consider the following two abstract groups of order $8q:$

\begin{enumerate}
\item the dihedral group $$\mathbf{D}_{4q}=\langle r, s : r^{4q}=s^2 = (sr)^2 =1\rangle.$$
\item the semidirect product  $$\mathbf{D}_{2q} \rtimes \mathbb{Z}_2 = \langle r, s, x : r^{2q}=s^2 = (sr)^2 =x^2 =(xr)^2=(xs)^2=1\rangle.$$
\end{enumerate}

\s

These groups act as groups of automorphisms -say $G_1$ and $G_2$ respectively- of families of compact Riemann surfaces -say $C_1$ and $C_2$ respectively- of genus $$g_1=1+8q \hspace{0.5 cm} \mbox{and}  \hspace{0.5 cm}g_2=1+4q,$$and signatures $s_1=(0; 2, \stackrel{8}{\ldots}, 2)$ and $(0; 2, \stackrel{6}{\ldots}, 2),$ respectively. The tuples $$\sigma_1=(sr,sr,s,s,s,s,r^{2q}, r^{2q}) \hspace{0.5 cm} \mbox{and}  \hspace{0.5 cm} \sigma_2 := (sr, sr, s, s, x, x)$$can be chosen as generating vectors of the desired type, in each case.

\s

The correspondences\begin{equation*} \Phi_1 : r \mapsto a, \hspace{0.3 cm} s \mapsto b \hspace{0.5 cm}  \mbox{and} \hspace{0.5 cm}  \Phi_2 : r \mapsto a, \hspace{0.3 cm} s \mapsto b, \hspace{0.3 cm} x \mapsto b\end{equation*}define epimorphisms of groups $\Phi_j : G_j \to H$ for $j=1,2;$ let $K_j$ denote the kernel of $\Phi_j.$ It is a straightforward task to check the following facts:

\begin{enumerate}
\item $\Phi_j(\sigma_j)=\sigma.$
\item $K_1=\langle r^{2q} \rangle \cong \mathbb{Z}_2$ acts with $8q$ fixed points on $C_1.$
\item $K_2=\langle xs \rangle \cong \mathbb{Z}_2$ acts freely on $C_2.$
\item $H \cong G_j/K_j$ acts on $S \cong (C_j)_{K_j}.$
\end{enumerate}

\s

We summarize the facts above in the next commutative diagram
$$\begin{tikzpicture} [node distance=1 cm, auto]
  \node (P) {$S$};
  \node (B) [right of=P, above of=P, node distance=0.9 cm] {$C_2$};
    \node (C) [left of=P, above of=P, node distance=0.9 cm] {$C_1$};
  \node (A) [below of=P, node distance=0.9 cm] {$\mathbb{P}^1$};
  \draw[->] (P) to node {{\tiny $\pi_H$}} (A);
  \draw[->] (C) to node  {{\tiny $\pi_{1}$}} (P);
    \draw[->] (B) to node [swap] {{\tiny $\pi_{2}$}} (P);
  \draw[->, bend right]  (C) to node [swap] {{\tiny$\pi_{G_1}$}} (A);
    \draw[->, bend left] (B) to node  {{\tiny $\pi_{G_2}$}} (A);
\end{tikzpicture}$$
Hence, by applying Theorem \ref{theolindo} to these two cases, we can assert that $$JC_1 \sim   {B}_2 \times  {B}_4 \times {B}_5^2 \times {B}_6^2 \times P_1$$for some abelian subvariety $P_1=P(C_1/S)$ of $JC_1$ of dimension $6q$, and $$JC_2 \sim   {B}_2  \times {B}_4 \times {B}_5^2 \times {B}_6^2 \times P_2$$for some abelian subvariety $P_2=P(C_2/S)$ of $JC_2$ of dimension $2q.$

\s

In order to further decompose $P_j$ to obtain the group algebra decomposition of $JC_j$ with respect to $G_j$ we only need to care about of those representations of $G_j$ which are not trivial in $K_j.$ Note that, by contrast, to have a complete knowledge of the irreducible representations of $G_2$ is not a trivial task.

\s

We proceed to study both cases separately.

\s

\begin{enumerate}
\item  For $j=1.$ Let us consider the two-dimensional complex irreducible representations of $G_1$ given by $$
r \mapsto \mbox{diag} (i, -i),\,\,\,\,\, s \mapsto \left( \begin{smallmatrix}
0 & 1 \\
1 & 0
\end{smallmatrix} \right) \hspace{0.3 cm}\mbox{and}\hspace{0.3 cm}
r \mapsto \mbox{diag} (\omega, \bar{\omega}), \,\,\,\,\, s \mapsto \left( \begin{smallmatrix}
0 & 1 \\
1 & 0
\end{smallmatrix} \right)$$where $\omega=\mbox{exp}(2 \pi i / 4q).$

It is easy to see that both of them are not trivial in $K_1$ and that the character fields are $\mathbb{Q}$ and $\mathbb{Q}(\omega + \bar{\omega})$ respectively. Clearly, they are not Galois associated and, in consequence, yield two rational irreducible representations of $G_1$ --say $\tilde{W}_7$ and $\tilde{W}_8$--  and hence two factors in the group algebra decomposition of $P_1$ with respect to $G_1.$ Thus $$P_1 \sim \tilde{B}_7^2 \times  \tilde{B}_8^2 \times Q_1$$for some abelian subvariety $Q_1$ of $P_1.$ Applying the formula \eqref{dimensiones}, we see that $$\mbox{dim}(\tilde{B}_7)=3 \hspace{0.5cm} \mbox{and} \hspace{0.5cm} \mbox{dim} (\tilde{B}_8)=3(q-1)$$and, consequently, $Q_1=0.$ Thereby, the following isogeny is obtained
$$JC_1 \sim_{G_1} ({B}_2 \times {B}_4 \times {B}_5^2 \times {B}_6^2 ) \times (\tilde{B}_7^2 \times  \tilde{B}_8^2).$$

\s

\item  For $j=2.$ Let us consider the one-dimensional complex irreducible representations of $G_2$ given by
\begin{equation*}
r \mapsto -1, \,\,\,\,\, s \mapsto 1, \,\,\,\,\, x \mapsto -1 \hspace{0.4 cm} \mbox{and}\hspace{0.4 cm} r \mapsto 1, \,\,\,\,\, s \mapsto -1, \,\,\,\,\, x \mapsto 1,
\end{equation*}and the two-dimensional ones given by
$$r \mapsto \mbox{diag} (\mu,  \bar{\mu} ) \,\,\,\,\, s \mapsto \left( \begin{smallmatrix}
0 & -1 \\
-1 & 0
\end{smallmatrix} \right) \,\,\,\,\, x \mapsto \left( \begin{smallmatrix}
0 & 1 \\
1 & 0
\end{smallmatrix} \right)$$and$$
r \mapsto\mbox{diag} (\mu^2,  \bar{\mu}^2 ) \,\,\,\,\, s \mapsto \left( \begin{smallmatrix}
0 & 1 \\
1 & 0
\end{smallmatrix} \right) \,\,\,\,\, x \mapsto \left( \begin{smallmatrix}
0 & -1 \\
-1 & 0
\end{smallmatrix} \right)$$where $\mu=\mbox{exp}(2 \pi i / q).$

It is easy to see that each one of them is not trivial in $K_2$ and that the character fields are $\mathbb{Q}$ and $\mathbb{Q}(\mu + \bar{\mu})$ respectively. Clearly, they are not Galois associated and, in consequence, yield four rational irreducible representations of $G_2$ --say $\hat{W}_7, \hat{W}_{8}, \hat{W}_9, \hat{W}_{10}$--  and hence four factors in the group algebra decomposition of $P_2$ with respect to $G_2.$ Thus $$P_2 \sim \hat{B}_7 \times \hat{B}_8 \times \hat{B}_9^2 \times \hat{B}_{10}^2 \times Q_2$$for some abelian subvariety $Q_2$ of $P_2.$ Applying the formula \eqref{dimensiones}, we see that  $$\mbox{dim}(\hat{B}_7)=\mbox{dim}(\hat{B}_8)=1 \hspace{0.3cm} \mbox{and} \hspace{0.3cm} \mbox{dim} (\hat{B}_9)=\mbox{dim} (\hat{B}_{10})=(q-1)/2$$and, consequently, $Q_2=0.$ Thereby, the following isogeny is obtained$$JC_2 \sim_{G_2} ({B}_2 \times {B}_4 \times {B}_5^2 \times {B}_6^2 ) \times \hat{B}_7 \times \hat{B}_8 \times  \hat{B}_9^2 \times  \hat{B}_{10}^2.$$

\end{enumerate}
\s

Finally, let us consider the subgroup $N=\langle a^q, b \rangle \cong \mathbb{Z}_2^2$ of $H.$ Note that \begin{displaymath}
d_{{V}_j}^N= \left\{ \begin{array}{ll}
0 & \textrm{if $j  = 2,4,5$}\\
1 & \textrm{if $j=6$}
  \end{array} \right.
\end{displaymath} ($B_1=B_3=0$) and therefore $B_{6} \sim JS_{N}.$ By Theorem \ref{corolindo}, it follows that $\tilde{B}_6 := \pi_{1}^*(B_6) \sim J{C_1}_{N_1}$ where $$N_1=\Phi_1^{-1}(N)= \langle r^q, s \rangle \cong \mathbf{D}_4,$$while $\hat{B}_6:= \pi_{2}^*(B_6) \sim J{C_2}_{N_2}$ where $$N_2=\Phi_2^{-1}(N)= \langle r^q, s, x \rangle \cong \mathbb{Z}_2^3.$$

\s

\subsection{Example 2} Let $q \ge 5$ be a prime number and let $C$ be a compact Riemann surface with action of  $G \cong \mathbb{Z}_q^2$ such that the signature of the quotient is $(0; q,q,q).$ If we denote by $a_1$ and $a_2$ two generators of $G$ acting with fixed points, then$$\sigma=(a_1, a_2, (a_1a_2)^{-1})$$can be chosen as a generating vector of $G$ of the desired type. The subgroups $$K_i=\langle a_1 a_2^{i} \rangle \cong \mathbb{Z}_q \hspace{0.5 cm} 2 \le i \le q-1$$act on $C$ and $\langle K_{i}, K_j \rangle=G$ for $2 \le i \neq j \le q-1.$ Now, by Theorem \ref{comoKR}, the isogeny$$JC \sim_G JC_{K_2} \times \cdots \times JC_{K_{q-1}} \times P$$ is obtained, for a suitable abelian subvariety $P$ of $JC.$ Furthermore, as the genus of $C_{K_i}$ equals $(q-1)/2$ for all $2 \le i \le q-1$ we have that $$ g_{K_2} + \cdots + g_{K_{q-1}}=(q-1)(q-2)/2=g_C$$and, consequently, $P=0.$ We obtain the isogeny $$JC \sim JC_{K_2} \times \cdots \times JC_{K_{q-1}}.$$

\end{document}